\newtheorem{theorem}{Theorem}[section]
\newtheorem{remark}{Remark}[section]
\newtheorem{defi}{Definition}[section]
\newtheorem{prop}{Proposition}[section]
\newcommand{\be}{\begin{equation}}
\newcommand{\ee}{\end{equation}}
\newcommand{\St}[2]{\genfrac{[}{]}{0pt}{}{#1}{#2}}
\newcommand{\st}[2]{\genfrac{\{}{\}}{0pt}{}{#1}{#2}}
\renewcommand{\theequation}{\thesection.\arabic{equation}}
\renewcommand{\thetheorem}{\thesection.\arabic{theorem}}
\numberwithin{equation}{section}
\begin{document}

\title[] {Formal iterated logarithms and exponentials and the Stirling numbers}

\author{Thomas J. Robinson}

%\thanks{}

\begin{abstract}
  We calculate the formal analytic expansions of certain formal
  translations in a space of formal iterated logarithmic and
  exponential variables.  The results show how the algebraic structure
  naturally involves the Stirling numbers of the first and second
  kinds, and certain extensions of these, which appear as expansion
  coefficients.
\end{abstract}

\maketitle

\renewcommand{\theequation}{\thesection.\arabic{equation}}
\renewcommand{\thetheorem}{\thesection.\arabic{theorem}}
\setcounter{equation}{0} \setcounter{theorem}{0}
\setcounter{section}{0}

\section{Introduction} 
In \cite{M} and \cite{HLZ} logarithmic formal calculus was used to set
up certain structures for the treatment of logarithmic intertwining
operators and ultimately logarithmic tensor category theory for modules
for a vertex operator algebra.  A minor footnote in \cite{HLZ}
involved two expansions of certain formal series which yielded a
classical combinatorial identity involving Stirling numbers of the
first kind (see (3.17), \cite{HLZ}), which those authors used to solve
a problem posed in \cite{Lu} (see Remark 3.8 in \cite{HLZ}).  These
series expansions were worked out during the course of a proof of a
logarithmic formal Taylor theorem (see Theorem 3.6, \cite{HLZ}).  A
detailed treatment of an efficient algebraic method to obtain formal
Taylor theorems in great generality was given in \cite{R1}.  This
method was demonstrated on a space involving formal versions of
iterated logarithmic and exponential variables, extending the setting
used in \cite{HLZ}.  The method of proof bypasses any series
expansions.  The purpose of this paper is to calculate an
arithmetically natural class of series expansions in this space of
iterated logarithmic and exponential formal variables.  We shall
calculate two different expansions, each extending the methods used in
the special case calculated in \cite{HLZ}.  Equating the coefficients
of these expansions yields a class of combinatorial identities,
including the special case found in \cite{HLZ} that motivated this
work.  We shall also note the corresponding finite difference
equations that certain of the expansion coefficients satisfy.

The combinatorial identity found in \cite{HLZ} involves the Stirling
numbers of the first kind.  We shall find, from an algebraically
symmetric setting, an analogous identity for the Stirling numbers of
the second kind.  We leave as an open question whether there is a
simple or natural way to predict this fact from the algebra (beyond
heuristics) or any salient features of this result having to do with
relationships between the Stirling numbers of both types and also,
conversely, whether or not the combinatorics might suggest any further
natural generalization or extension of the algebraic setting.

This paper is almost entirely self-contained (in particular, no
knowledge of vertex algebra theory, let alone the theory of
logarithmic intertwining operators, is necessary to read this paper).
We need only one outside result, Theorem \ref{th:itlogrec}, which gives a
recursion formula.  We do develop from scratch the necessary definitions
to fully understand the statement of this recursion, and refer the
reader to \cite{R2} for a full proof.

\section{Stirling numbers and other sundry items recalled}
We recall in this section, and record the notation which we shall be
using for, certain standard formal and combinatorial objects and facts
which we use, or which appear during the course of this work.

For $r \in \mathbb{C}$ and $m \geq 0$ we extend, as usual, the
binomial coefficient notation:
\begin{align*}
\binom{r}{m}=\frac{r(r-1)\cdots(r-m+1)}{m!}.
\end{align*}

We make extensive use of the usual formal exponential and logarithmic
series which we recall here.
\begin{align}
\label{eq:formalexp}
e^{yX}=\sum_{n \geq 0} \frac{(yX)^{n}}{n!},
\end{align}
where $y$ is a formal variable and where $X$ is any formal object such
that each coefficient may be finitely computed.  Similarly
\begin{align}
\label{eq:formallog}
\log (1+X)=\sum_{n \geq 0}\frac{(-1)^{n-1}}{n}X^{n},
\end{align}
where, again, $X$ is any formal object such that each coefficient may
be finitely computed.

\begin{remark} \rm We shall also be using a second type of formal
  exponential and logarithm which we discuss in
  Section~\ref{sec:varch} (See also Remark 3.2 in \cite{HLZ} where the
  presence of two different types of formal logarithm is discussed).
\end{remark}

We shall be recovering certain classical facts about Stirling numbers.
We review these well-known facts here (cf. \cite{LW}, \cite{C}).  Our
results, among other things, show the equivalence of three different
characterizations of the Stirling numbers of the first and second
kinds.  Each of these characterizations naturally arise during the
course of the work in this paper.  If one were working in ignorance of
the classical results it would be natural to use different notation
each time the Stirling numbers reappeared, and only at the end, when
the equivalence is shown to unify the notation.  However, since the
results are classical, we shall anticipate these results and use
standard notation throughout, unless otherwise indicated, for purposes
of readability.
\begin{remark} \rm The Stirling numbers have combinatorial
  interpretations (cf.  \cite{LW}, \cite{C}) but we shall need only
  algebraic expressions for them here.
\end{remark}
The (signless) Stirling numbers of the first kind, denoted by $\St{m}{n}$ 
%${ m \atopwithdelims[] n }$ 
for $0 \leq n \leq m$, may be defined by either of the
following two explicit expressions:
\begin{align}
\St{m}{n}
%{ m \atopwithdelims[] n }
=
\sum_{0 \leq t_{1} < t_{2} < \cdots < t_{m-n}< m}t_{1} \cdots t_{m-n}.
\label{stirl1}
\end{align}
and
\begin{align}
\St{m}{n}
%{ m \atopwithdelims[]n }
=\frac{m!}{n!}
\underset{i_{l} \geq 1}{\sum_{i_{1}+\cdots+ i_{n}=m}}
\frac{1}{i_{1}\cdots i_{n}} \label{eq:stirling1},
\end{align}
where in both (\ref{eq:stirling1}) and (\ref{stirl1}), $\St{0}{0}$ 
%${0 \atopwithdelims[] 0}$ 
is interpreted as $1$ and $\St{m}{0}$ 
%${ m \atopwithdelims[] 0}$ 
is interpreted as $0$ for $m \geq 1$.

We also recall, and it is routine to verify, using
(\ref{eq:stirling1}), that for $n \geq 0$
\begin{align}
\label{stirl1gen}
(\log (1+x))^{n}=\sum_{m \geq n}(-1)^{m-n}\frac{n!}{m!} 
\St{m}{n}
%{m \atopwithdelims[] n} 
x^{m}.
\end{align}

The Stirling numbers of the first kind may also be defined as the
unique solution to the following discrete boundary value problem.
\begin{align}
\label{rec1}
\St{m}{n}
%{ m \atopwithdelims[] n }
=(m-1)
\St{m-1}{n}
%{ m-1  \atopwithdelims[] n }
+
\St{m-1}{n-1}
%{ m-1 \atopwithdelims[] n-1 }
,
\end{align}
for $1 \leq n < m$ with
\begin{align}
\label{rec1b}
%{0 \atopwithdelims{} j }=0 \quad \text{for} \quad j >0,
%\qquad
\St{m}{0}
%{m \atopwithdelims[] 0}
=0 \quad \text{for} \quad m > 0,
\qquad \text{and} \quad
\St{m}{m}
%{m \atopwithdelims[] m}
=1 \quad \text{for} \quad m \geq 0.  
\end{align}

The Stirling numbers of the second kind, denoted by $\st{m}{n}$
for $m,n \geq 0$, may be defined by either of the following two
explicit expressions:
\begin{align}
\st{m}{n}
%{m \atopwithdelims\{\} n}
=
\sum_{0 \leq i_{1} \leq i_{2} \leq \cdots  \leq i_{m-n} 
\leq n}i_{1}i_{2}\cdots i_{m-n}.
\label{stirl2}
\end{align}
and
\begin{align}
\st{m}{n} =
\frac{m!}{n!}
\underset{i_{l} \geq 1}
{
\sum_{i_{1}+i_{2}+\cdots i_{n}=m}
}
\frac{1}{i_{1}!i_{2}!\cdots i_{n}!},
\label{eq:stirling2}
\end{align}
where in both (\ref{eq:stirling2}) and (\ref{stirl2}), $\st{0}{0}$
is interpreted as $1$ and $\st{m}{0}$ is interpreted as $0$ for $m
\geq 1$.

We also recall, and it is routine to verify, using
(\ref{eq:stirling2}), that for $n \geq 0$
\begin{align}
\label{stirl2gen}
(e^{x}-1)^{n}=\sum_{m \geq n}\frac{n!}{m!}\st{m}{n}x^{m}.
\end{align}
The Stirling numbers of the second kind may also be defined as the
unique solution to the following discrete boundary value problem.
\begin{align}
\label{rec2}
\st{m}{n}=n\st{m-1}{n} +\st{m-1}{n-1},
\end{align}
for $1 \leq n < m$ with 
\begin{align}
\label{rec2b}
%{0 \st{}{} n }=0 \quad \text{for} \quad n >0,
%\qquad
\st{m}{0}=0 \quad \text{for} \quad m > 0,
\qquad \text{and} \quad
\st{m}{m}=1 \quad \text{for} \quad m \geq 0.  
\end{align}

\section{The automorphism property}
Our approach begins by considering a certain type of automorphism
property.  Let $A$ be an algebra over $\mathbb{C}$ and let $D$ be a
linear operator on $A$.  We wish to consider iterating the action on
$D$ arbitrarily many times.  Thus it is natural to consider a
generating function.  Let
\begin{align*}
f(x)=\sum_{n \geq 0}f_{n}x^{n}
\end{align*}
where $f_{n} \neq 0 \in \mathbb{C}$ and where $x$ is a formal
variable. We want the coefficents to be nonzero so that we account for
every iteration of $D$ (including the $0$-th iteration) when
considering the operator
\begin{align*}
f(xD):A \rightarrow A [[x]],   
\end{align*}
where $A[[x]]$ are the formal power series over $A$.  Later we shall
also use the notation $A[x]$ to mean the formal polynomials over $A$.
We take as a (logical) jumping off point the question of whether we
can find $f(x)$ to get
\begin{align}
\label{eq:aut}
f(xD)(ab)=(f(xD)a)(f(xD)b), \qquad \text{(the automorphism property)}
\end{align}
for all $a,b \in A$.  
\begin{remark} \rm This question is motivated by many well
  known such operators appearing in formal calculus, for instance, in
  Chapter 2 of \cite{FLM}.  Such an operator will turn out
  to be an analogue of an element of a one-parameter Lie group, but we
  shall not need any knowledge of such groups in this work.
\end{remark} 
The identity (\ref{eq:aut}) consists of a sequence of identities, one
for each power of $x$.  The first two identities go as thus
\begin{align*}
f_{0}ab&=f_{0}^{2}ab\\
f_{1}D(ab)&=f_{1}f_{0}(Da)b+f_{0}f_{1}a(Db).
\end{align*}
The first identity forces $f_{0}=1$ since we insisted it be nonzero.
Because $f_{1} \neq 0$ the second identity now gives us
\begin{align*}
D(ab)=(Da)b+a(Db),
\end{align*}
or in other words, it shows that $D$ must be a {\it derivation}. We
may use this fact to expand $D^{m}(ab)$ into terms of the form
$D^{n}aD^{m}b$ and because of this it is easily seen to be convenient
(upon inspection after writing out the next few identities using the
derivation expansion) to make the following assumption: given $M,N
\geq 0$ there exists $a,b \in \mathbb{C}$ such that the elements
$D^{n}aD^{m}b$ for $0 \leq n \leq N$ and $0 \leq m \leq M$ are
linearly independent.  Then we are forced to equate coefficients in an
easy fashion.  In the $n$-th identity, we may, for instance, equate
the coefficients of $(D^{n-1}a)b$.  It is easy to see that this gives
\begin{align*}
nf_{n}=f_{n-1}f_{1},
\end{align*}
for $n \geq 1$.  It is further easy to see that therefore
\begin{align*}
f_{n}=\frac{f_{1}^{n}}{n!},
\end{align*}
for $n \geq 1$.  Thus, in looking for an operator satisfying the
automorphism property we are now reduced to the following possibility:
\begin{align*}
f(xD)=\sum_{n \geq 0}\frac{(f_{1}xD)^{n}}{n!},
\end{align*}
where $D$ is a derivation.  
%
%This motivates from scratch the following
%well-known notation:
%\begin{align*}
%e^{y\square}=\sum_{n \geq 0}\frac{(y\square)^{n}}{n!},
%\end{align*}
%where $\square$ is any formal object for which the expansion makes
%sense (i.e. each term in $y$ may be finitely calculated).  In this
%notation the operators we are interested in may be written as
Recalling (\ref{eq:formalexp}), we have
\begin{align*}
f(xD)=e^{f_{1}xD}.
\end{align*}
In fact, these operators do satisfy the automorphism property as we
verify next.
\begin{prop} (The ``automorphism property'') Let $A$ be an algebra
over $\mathbb{C}$ and let $\alpha \in \mathbb{C}$.  Let $D$ be a
derivation on $A$.  Then
\begin{align*}
e^{\alpha yD}(ab)=\left(e^{\alpha yD}a\right)\left(e^{\alpha yD}b\right).
\end{align*}
\end{prop}

\begin{proof}
  If $\alpha=0$ the statement is trivial.  Expanding each side and
  equating coefficients leads to the identities
\begin{align*}
D^{k}(ab)=\underset{n,m \geq 0}{\sum_{n+m=k}}\frac{k!}{n!m!}D^{n}aD^{m}b,
\end{align*}
for $k \geq 0$.  The $k=0$ identity is trivial and the $k=1$ identity
is the derivation property of $D$.  We proceed by induction assuming
that we have verified the statement up to $k$.  Then
\begin{align*}
D^{k+1}(ab)&=D(D^{k}(ab))\\
&=D\underset{n,m \geq 0}{\sum_{n+m=k}}\frac{k!}{n!m!}D^{n}aD^{m}b\\
&=\underset{n,m \geq 0}{\sum_{n+m=k}}\frac{k!}{n!m!}D^{n+1}aD^{m}b
\quad +\underset{n,m \geq 0}{\sum_{n+m=k}}\frac{k!}{n!m!}D^{n}aD^{m+1}b\\
&=\underset{n,m \geq 1}{\sum_{n+m=k+1}}\frac{k!}{(n-1)!m!}D^{n}aD^{m}b\\
&\quad +\underset{n,m \geq 1}{\sum_{n+m=k+1}}\frac{k!}{n!(m-1)!}D^{n}aD^{m}b
+D^{k+1}a+D^{k+1}b\\
&=\underset{n,m \geq 1}{\sum_{n+m=k+1}}
\left(\frac{k!}{(n-1)!m!}+\frac{k!}{n!(m-1)!}\right)D^{n}aD^{m}b
+D^{k+1}a+D^{k+1}b\\
&=\underset{n,m \geq 0}{\sum_{n+m=k+1}}
\frac{(k+1)!}{n!m!}D^{n}aD^{m}b.\\
\end{align*}
\end{proof}
We shall content ourselves with the case $\alpha=1$ for the remainder
of the paper.
\begin{remark} \rm The preceding proposition is well known (cf.
  \cite{LL}).  It may be proved more easily if one assumes more
  information, for instance, about the binomial expansion (see e.g.
  the proof of Proposition 2.1 in \cite{R1}).  However, assuming this
  knowledge at this stage seems out of order philosophically as we
  shall essentially (re)-prove the binomial theorem as a special
  ``base'' case later on.  Moreover, it is a philosophical point that
  it is ``good'' (vaguely ``more intrinsic'') to take the automorphism
  property as primitive and later obtain combinatorial identities as
  ``automatic'' results, in this case arising from different
  calculations of certain formal expansion coefficients.
\end{remark}
\section{Formal iterated logarithms and exponentials}
\label{sec:varch}
Let $\ell_{n}(x)$ be formal commuting variables for $n \in
\mathbb{Z}$. We consider the algebra with an underlying vector space
basis consisting of all elements of the form
\begin{align*}
\prod_{i \in \mathbb{Z}}\ell_{i}(x)^{r_{i}},
\end{align*}
where $r_{i} \in \mathbb{C}$ for all $i \in \mathbb{Z}$, and all but
finitely many of the exponents $r_{i}=0$.  The multiplication is the
obvious one (when multiplying two monomials simply add the
corresponding exponents and linearly extend).  We call this algebra
\begin{align*}
\mathbb{C}\{[\ell]\}.
%\mathbb{C}
%\{[\dots,\ell_{-1}(x),
%\ell_{0}(x),\ell_{1}(x),\dots]\}.
\end{align*}
We let $\frac{d}{dx}$ be the unique derivation on $\mathbb{C}\{[\ell]\}$
%(which for short we denote by $\mathbb{C}\{[\ell^{\pm 1}]\}$) 
satisfying
\begin{align*}
\frac{d}{dx}\ell_{-n}(x)^{r}
&=r\ell_{-n}(x)^{r-1}\prod_{i=-1}^{-n}\ell_{i}(x),\\
\frac{d}{dx}\ell_{n}(x)^{r}
&=r\ell_{n}(x)^{r-1}\prod_{i=0}^{n-1}\ell_{i}(x)^{-1},\\
\text{\rm and   } \qquad \frac{d}{dx}\ell_{0}(x)^{r}
&=r\ell_{0}(x)^{r-1},
\end{align*}
for $n > 0$ and $r \in \mathbb{C}$.  
\begin{remark} \rm
\label{rem:logexpsecret}
Secretly, $\ell_{n}(x)$ is the $(-n)$-th iterated
exponential for $n < 0$ and the n-th iterated logarithm for $n > 0$
and $\ell_{0}(x)$ is $x$ itself.
\end{remark}
\begin{remark} \rm
To see that this does indeed uniquely define a derivation, we note that
$\frac{d}{dx}$ must coincide with the unique linear map satisfying
\begin{align*}
\frac{d}{dx}\prod_{i \in \mathbb{Z}}\ell_{i}(x)^{r_{i}}
=\sum_{j \in \mathbb{Z}}\frac{d}{dx}\ell_{j}(x)^{r_{j}}
\prod_{i \neq j \in \mathbb{Z}}\ell_{i}(x)^{r_{i}},
\end{align*}
on a basis of $\mathbb{C}\{[\ell]\}$.  This establishes uniqueness.
We need to check that this linear map is indeed a derivation.  It is
routine and we leave it to the reader to verify that it is enough to
check that
\begin{align*}
\frac{d}{dx}(ab)=\left(\frac{d}{dx}a\right)b+a\left(\frac{d}{dx}b\right),
\end{align*}
for basis elements $a$ and $b$.  Another routine calculation reduces
the case to where $a=\ell_{i}(x)^{r}$ and $b=\ell_{i}(x)^{s}$ for $r,s
\in \mathbb{C}$.  Checking this case is trivial once one notes that
\begin{align*}
\frac{d}{dx}\ell_{j}(x)^{r}=r\ell_{j}(x)^{r-1}\frac{d}{dx}\ell_{j}(x).
\end{align*}
\end{remark}
If we let $x$ and $y$ be independent formal variables, then the formal
exponentiated derivation $e^{y\frac{d}{dx}}$, defined by the
expansion, $\sum_{k \geq 0}y^{k}\left(\frac{d}{dx}\right)^{k}/k!$,
acts on a (complex) polynomial $p(x)$ as a formal translation in $y$.
That is, as the reader may easily verify, we have
\begin{align}
e^{y\frac{d}{dx}}p(x)=p(x+y).\label{eq:PFTT}
\end{align}
This motivates the following definition (as in \cite{R1}).
\begin{defi}
Let
\begin{align*}
\ell_{n}(x+y)=e^{y\frac{d}{dx}}\ell_{n}(x) \qquad \text{for} 
\quad n \in \mathbb{Z}.
\end{align*}
\end{defi}

\begin{prop}
(The iterated exponential/logarithmic formal Taylor theorem)\\ For $p(x)
\in \mathbb{C}\{[\ell]\}$ we have:
\begin{align*}
e^{y\frac{d}{dx}}p(x)=p(x+y).
\end{align*} 
\end{prop}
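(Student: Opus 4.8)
The plan is to recognize that both operations in the claimed identity are algebra homomorphisms from $\mathbb{C}\{[\ell]\}$ into $\mathbb{C}\{[\ell]\}[[y]]$, and then to exploit this multiplicativity to reduce everything to a single factor. On the left, the automorphism property (the preceding proposition, with $\alpha=1$) together with linearity and the fact that $e^{y\frac{d}{dx}}1=1$ shows that $e^{y\frac{d}{dx}}$ is such a homomorphism. On the right, $p(x+y)$ is read as the result of substituting $\ell_i(x+y)=e^{y\frac{d}{dx}}\ell_i(x)$ for each $\ell_i(x)$ and extending multiplicatively, where a complex power is interpreted by the binomial series: writing $\ell_i(x+y)=\ell_i(x)(1+w_i)$ with $w_i=\ell_i(x)^{-1}(\ell_i(x+y)-\ell_i(x))$, one has $w_i$ of positive order in $y$ (since the $k=0$ term of $e^{y\frac{d}{dx}}\ell_i(x)$ is $\ell_i(x)$), so that $\ell_i(x+y)^r:=\ell_i(x)^r\sum_{m\ge 0}\binom{r}{m}w_i^m$ is a well-defined element of $\mathbb{C}\{[\ell]\}[[y]]$; this substitution map is manifestly a homomorphism. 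Since an element of $\mathbb{C}\{[\ell]\}$ is a finite linear combination of basis monomials, each of which is a finite product of factors $\ell_i(x)^{r}$, the two homomorphisms agree on all of $\mathbb{C}\{[\ell]\}$ as soon as they agree on every such single factor.

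Thus everything reduces to the identity $e^{y\frac{d}{dx}}\ell_i(x)^{r}=\ell_i(x+y)^{r}$ for a fixed $i\in\mathbb{Z}$ and $r\in\mathbb{C}$. This is the heart of the matter, and it is exactly where the complex exponents make the statement nontrivial. Write $\ell=\ell_i(x)$ and $\ell'=\frac{d}{dx}\ell$. Using $\frac{d}{dx}\ell^{r}=r\ell^{r-1}\ell'$ (recorded in the remark above) and induction, one checks that $\left(\frac{d}{dx}\right)^{n}\ell^{r}=\ell^{r}q_{n}(r)$, where each $q_{n}(r)$ is a \emph{polynomial in $r$} with coefficients in $\mathbb{C}\{[\ell]\}$ that do not depend on $r$; indeed $q_{n+1}(r)=r(\ell^{-1}\ell')q_{n}(r)+\frac{d}{dx}q_{n}(r)$. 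Hence the coefficient of $y^{n}$ in $e^{y\frac{d}{dx}}\ell^{r}=\sum_{k\ge 0}\frac{y^{k}}{k!}\left(\frac{d}{dx}\right)^{k}\ell^{r}$ equals $\ell^{r}$ times a polynomial in $r$. On the other side, the coefficient of $y^{n}$ in $\ell^{r}(1+w)^{r}=\ell^{r}\sum_{m\ge 0}\binom{r}{m}w^{m}$ receives contributions only from the terms with $m\le n$ (because $w=w_i$ has positive $y$-order), and each $\binom{r}{m}$ is a polynomial in $r$; so this coefficient too is $\ell^{r}$ times a polynomial in $r$.

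It therefore suffices to prove the single-factor identity for infinitely many values of $r$, since two polynomials in $r$ that agree at infinitely many points coincide, giving the identity for all $r\in\mathbb{C}$. I would take $r=N$ a non-negative integer: then the automorphism property gives $e^{y\frac{d}{dx}}\ell^{N}=(e^{y\frac{d}{dx}}\ell)^{N}=(\ell(1+w))^{N}$, while for the integer exponent $N$ the binomial series terminates and reproduces the genuine $N$-th power, so that $\ell^{N}(1+w)^{N}=(\ell(1+w))^{N}$ as well; the two sides agree. Cancelling the invertible factor $\ell^{r}$ and comparing the resulting polynomials in $r$ then completes the reduction, and with it the proof. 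The main obstacle is precisely this passage from integer to complex exponents: for integer powers the homomorphism property is all one needs, but for general $r$ the right-hand side must first be given meaning through the binomial series and then matched by a separate argument (the polynomial-identity principle above, or equivalently checking that both sides solve the same first-order formal differential equation in $y$ with the same constant term $\ell^{r}$). This is the sense in which the ordinary binomial theorem, recovered in the base case $i=0$ where $\ell_{0}(x)=x$ and $\ell_{0}(x+y)=x+y$, sits underneath the whole statement.
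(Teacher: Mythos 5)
Your proof is correct, but it takes a genuinely different route from the paper's. The paper's entire proof is the one line ``The result follows from the automorphism property'': in the framework of \cite{R1} that the paper adopts, $\ell_{n}(x+y)$ is \emph{defined} to be $e^{y\frac{d}{dx}}\ell_{n}(x)$ and the method deliberately bypasses all series expansions, so the proposition amounts to the consistency statement that $e^{y\frac{d}{dx}}$, being an algebra homomorphism by the automorphism property, interacts with the substitution notation multiplicatively on (finite products of) basis factors; no binomial series ever enters. You instead give the right-hand side independent analytic content, defining $\ell_{i}(x+y)^{r}$ by the binomial series $\ell_{i}(x)^{r}\sum_{m\geq 0}\binom{r}{m}w_{i}^{m}$ with $w_{i}$ of positive order in $y$, and then prove the single-factor identity $e^{y\frac{d}{dx}}\ell_{i}(x)^{r}=\ell_{i}(x+y)^{r}$ by the polynomial-in-$r$ argument anchored at nonnegative integer exponents, where the automorphism property settles the integer case. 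That anchoring trick is exactly the technique the paper itself deploys later (the proof of Proposition~\ref{prop:method2case1} and its repeated invocations in Sections 6 and 7), so in effect you have folded a piece of the paper's ``method 2'' expansions into the Taylor theorem itself: your version proves more up front (it pins down the expansion of complex powers), at the cost of more work, whereas the paper's definitional reading keeps the theorem a pure consequence of the automorphism property and postpones all expansions to the later sections. One step you gloss: calling the substitution map ``manifestly a homomorphism'' conceals Vandermonde's identity $\sum_{k}\binom{r}{k}\binom{s}{m-k}=\binom{r+s}{m}$, which is what makes $\ell_{i}(x+y)^{r}\ell_{i}(x+y)^{s}=\ell_{i}(x+y)^{r+s}$ hold for the binomial-series definition; this is standard (and provable by your own finitely-many-values principle), and in any case avoidable, since agreement on a basis monomial already follows from the automorphism property applied to the finite product of its factors $\ell_{i}(x)^{r_{i}}$ together with your single-factor identity, without ever invoking multiplicativity of the substitution map itself.
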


\begin{proof}
The result follows from the automorphism property.
\end{proof}

\section{Formal analytic expansions: method 1}
In this section we begin to calculate formal analytic expansions of
$\ell_{N}(x+y)^{r}$ for $N \in \mathbb{Z}$, $r \in \mathbb{C}$.
Recall Remark \ref{rem:logexpsecret} for the ``true meanings'' of
these objects.  We shall begin with the cases when $N \geq 0$.  We
shall proceed step by step, first handling the cases $N=0$ and $N=1$
separately.  The reader may skip ahead to Subsection
\ref{subsection:itlog1} without any loss of generality.
\subsection{Case $N=0$, the case ``$x$''}
It is easy to see how the $m$-th power of $\frac{d}{dx}$ acts on
$\ell_{0}(x)^{r}$ because
$\frac{d}{dx}\ell_{0}(x)^{r}=r\ell_{0}(x)^{r-1}$, which is a monomial.
This is the essential observation that this method is based on, namely
focusing on such ``isolated'' monomials.  Later we shall have to
expand $\frac{d}{dx}$ as a sum of linear operators yielding such
monomial results, but in this case it is already immediate that for $r
\in \mathbb{C}$ and $m \geq 0$
\begin{align*}
\left(\frac{d}{dx}\right)^{m}\ell_{0}(x)^{r}
=r(r-1)\cdots(r-m+1)\ell_{0}(x)^{r-m}.
\end{align*}
%It will be convenient, as usual, to call
%\begin{align*}
%\binom{r}{m}=\frac{r(r-1)\cdots(r-m+1)}{m!},
%\end{align*}
%for all $r \in \mathbb{C}$.  Then we have the following:
\begin{prop}
For all $r \in \mathbb{C}$
\begin{align*}
e^{y\frac{d}{dx}}\ell_{0}(x)^{r}=\sum_{m \geq
0}\binom{r}{m}\ell_{0}(x)^{r-m}y^{m}.
\end{align*}
\end{prop}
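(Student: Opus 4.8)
The plan is to reduce everything to the definition of the formal exponentiated derivation together with the monomial derivative formula established immediately above this statement. First I would write $e^{y\frac{d}{dx}}$ according to its defining expansion and apply it to $\ell_{0}(x)^{r}$, obtaining
\begin{align*}
e^{y\frac{d}{dx}}\ell_{0}(x)^{r}=\sum_{m \geq 0}\frac{y^{m}}{m!}\left(\frac{d}{dx}\right)^{m}\ell_{0}(x)^{r}.
\end{align*}
Because $\ell_{0}(x)$ plays the role of $x$ and $\frac{d}{dx}\ell_{0}(x)^{r}=r\ell_{0}(x)^{r-1}$ is again a single monomial, each successive application of $\frac{d}{dx}$ keeps us inside the span of a single monomial, and we have already recorded the closed form $\left(\frac{d}{dx}\right)^{m}\ell_{0}(x)^{r}=r(r-1)\cdots(r-m+1)\ell_{0}(x)^{r-m}$.

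Next I would substitute this closed form into the expansion term by term. This substitution is legitimate at the level of formal series in $y$: the coefficient of each power $y^{m}$ is contributed by exactly one value of the summation index, so no infinite sum of coefficients ever occurs and the right-hand side is manifestly a well-defined element of $\mathbb{C}\{[\ell]\}[[y]]$. After the substitution, the coefficient of $y^{m}$ is precisely $\frac{1}{m!}r(r-1)\cdots(r-m+1)\ell_{0}(x)^{r-m}$.

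Finally I would identify the falling factorial divided by $m!$ with the extended binomial coefficient recalled at the start of Section~2, namely $\frac{r(r-1)\cdots(r-m+1)}{m!}=\binom{r}{m}$, which yields the asserted expansion directly. I do not anticipate any real obstacle in this argument; essentially all of the work is the monomial derivative computation already in hand, and conceptually the point of this proposition is to serve as the ``base case'' (the formal binomial theorem) from which the substantially harder expansions for $N \neq 0$ will be bootstrapped later.
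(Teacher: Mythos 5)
Your proposal is correct and is exactly the argument the paper intends: the paper records the closed form $\left(\frac{d}{dx}\right)^{m}\ell_{0}(x)^{r}=r(r-1)\cdots(r-m+1)\ell_{0}(x)^{r-m}$ immediately before the proposition and then treats the result as immediate, which is precisely your expand-substitute-identify-$\binom{r}{m}$ chain made explicit. No gaps; your added remark on the well-definedness of the formal series in $y$ is a harmless strengthening of what the paper leaves tacit.
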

\begin{flushright} $\qed$ \end{flushright}
\subsection{Case $N=1$, the case ``$\log x$''}
We closely follow the argument leading to (3.15) in \cite{HLZ}.  We
have for $r,s \in \mathbb{C}$
\begin{align*}
\frac{d}{dx}\ell_{0}(x)^{s}\ell_{1}(x)^{r}
=s\ell_{0}(x)^{s-1}\ell_{1}(x)^{r}+r\ell_{0}(x)^{s-1}\ell_{1}(x)^{r-1}.
\end{align*}
 Then define two linear operators $T_{0}$ and $T_{1}$ on
 $\mathbb{C}[\ell_{0}(x),\ell_{1}(x)]$ by
\begin{align*}
T_{0}\ell_{0}(x)^{s}\ell_{1}(x)^{r}&=s\ell_{0}(x)^{s-1}\ell_{1}(x)^{r}\\
T_{1}\ell_{0}(x)^{s}\ell_{1}(x)^{r}&=r\ell_{0}(x)^{s-1}\ell_{1}(x)^{r-1}.
\end{align*}
Then 
\begin{align*}
\left(\frac{d}{dx}\right)^{m}\ell_{0}(x)^{s}\ell_{1}(x)^{r}
=(T_{0}+T_{1})^{m}\ell_{0}(x)^{s}\ell_{1}(x)^{r}.
\end{align*}
It is not hard to see that
\begin{align*}
\left(\frac{d}{dx}\right)^{m}\ell_{0}(x)^{s}\ell_{1}(x)^{r}
&=\sum_{j=0}^{m}r(r-1)\cdots(r-j+1) \cdot\\
&\quad \cdot \left(\sum_{0 \leq t_{1} <t_{2} 
<\cdots<t_{m-j}<m}(s-t_{1})\cdots(s-t_{m-j})
\right)
\ell_{0}(x)^{s-m}\ell_{1}(x)^{r-j},
\end{align*}
where the reader should think of $j$ as corresponding to the number of
$T_{1}$'s in a summand of the expansion of $(T_{0}+T_{1})^{m}$ and the
$t_{i}$'s as corresponding to the positions of the $T_{0}$'s.  Thus we
have the following
\begin{prop}
For all $r \in \mathbb{C}$
\begin{align}
\label{eq:method3 log}
e^{y\frac{d}{dx}}\ell_{1}(x)^{r}
&=\sum_{m \geq 0}\left(\frac{y}{\ell_{0}(x)}\right)^{m}
\sum_{j=0}^{m}\binom{r}{j}\ell_{1}(x)^{r-j}
\frac{j!}{m!} (-1)^{m-j}\cdot \nonumber\\
& \quad \cdot \left(\sum_{0 \leq t_{1} <t_{2} <\cdots<t_{m-j}<m}
t_{1}\cdots t_{m-j}\right).
\end{align}
\end{prop}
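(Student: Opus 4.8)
The plan is to apply the formal Taylor theorem (the iterated log/exp version proved earlier) together with the explicit formula for $\left(\frac{d}{dx}\right)^m$ acting on a monomial that the paper has just established, and then specialize and repackage. Recall that the preceding display gives, for $r,s \in \mathbb{C}$,
\begin{align*}
\left(\frac{d}{dx}\right)^{m}\ell_{0}(x)^{s}\ell_{1}(x)^{r}
=\sum_{j=0}^{m}r(r-1)\cdots(r-j+1)
\left(\sum_{0 \leq t_{1} <\cdots<t_{m-j}<m}(s-t_{1})\cdots(s-t_{m-j})\right)
\ell_{0}(x)^{s-m}\ell_{1}(x)^{r-j}.
\end{align*}
Since $e^{y\frac{d}{dx}}\ell_{1}(x)^{r} = \sum_{m\geq 0}\frac{y^m}{m!}\left(\frac{d}{dx}\right)^m \ell_{1}(x)^r$ by definition, I would set $s=0$ in the formula above to isolate $\ell_{1}(x)^r = \ell_{0}(x)^{0}\ell_{1}(x)^{r}$, substitute into the exponential series, and then simplify.

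The main work is bookkeeping. First I would observe that $r(r-1)\cdots(r-j+1) = j!\binom{r}{j}$, which produces the $\binom{r}{j}$ and a factor of $j!$. Next, setting $s=0$ turns the inner sum $\sum (s-t_1)\cdots(s-t_{m-j})$ into $\sum(-t_1)\cdots(-t_{m-j}) = (-1)^{m-j}\sum t_1\cdots t_{m-j}$, which accounts for the sign $(-1)^{m-j}$ and the signless product $t_1\cdots t_{m-j}$ appearing in the target. The factor $\ell_{0}(x)^{s-m}$ becomes $\ell_{0}(x)^{-m}$, which combines with the $y^m$ from the exponential series to give $\left(y/\ell_{0}(x)\right)^m$, and the $1/m!$ from the exponential series supplies the $\frac{j!}{m!}$ together with the $j!$ noted above. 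Collecting all these gives exactly the right-hand side of (\ref{eq:method3 log}).

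The only genuine obstacle is justifying convergence well enough that reindexing is legitimate: interchanging the order of the $m$-sum and the $j$-sum, and recognizing that for each fixed power $y^m$ the coefficient is a finite sum over $0\leq j\leq m$, so that everything lives in $\mathbb{C}\{[\ell]\}[[y]]$ and no infinite summations collide. This is immediate here because $e^{y\frac{d}{dx}}$ is defined precisely by its term-by-term expansion and each application of $\left(\frac{d}{dx}\right)^m$ yields a finite sum of monomials; hence equating coefficients of $y^m$ is valid and the rearrangement is purely formal. I would therefore write out the substitution explicitly, verify the matching of all four numerical factors ($\binom{r}{j}$, $j!/m!$, $(-1)^{m-j}$, and the monomial powers), and conclude. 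No deeper idea beyond the formula already in hand is required, so the proposition is essentially a direct corollary of the monomial action computation combined with the definition of the formal translation.
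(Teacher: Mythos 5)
Your proposal is correct and follows essentially the same route as the paper: the paper derives the displayed formula for $\left(\frac{d}{dx}\right)^{m}\ell_{0}(x)^{s}\ell_{1}(x)^{r}$ via the operators $T_{0}$ and $T_{1}$ and then obtains the proposition exactly as you do, by specializing $s=0$, substituting into the expansion of $e^{y\frac{d}{dx}}$, and matching the factors $j!\binom{r}{j}$, $(-1)^{m-j}$, $\frac{1}{m!}$, and $\left(y/\ell_{0}(x)\right)^{m}$. Your remarks on the purely formal validity of the rearrangement (finite coefficient of each $y^{m}$) are also consistent with the paper's implicit treatment, so nothing is missing.
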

\begin{flushright} $\qed$ \end{flushright}
\subsection{Case $N \geq 0$, ``iterated logarithms''}
\label{subsection:itlog1}
We begin by defining the following
operators:
\begin{align*}
T_{i}=\prod_{j=0}^{i-1}\ell_{j}(x)^{-1}
\frac{\partial}{\partial \ell_{i}(x)} \qquad i \geq 0.
\end{align*}
Then 
\begin{align*}
\frac{d}{dx}=\sum_{i \geq 0}T_{i}.
\end{align*}
Therefore, 
\begin{align}
\label{def:Tlog}
\left(\frac{d}{dx}\right)^{m}=\sum_{i_{1},i_{2},\cdots, i_{m} \geq 0}
T_{i_{1}}T_{i_{2}}\cdots T_{i_{m}}.
\end{align} 
If we consider all the monomials with a fixed number of occurrences of
each $T_{i}$, and call this fixed number $j_{i}$, then we can
partially calculate to get for $c_{l} \in \mathbb{C}$ 
\begin{align*}
\left(\frac{d}{dx}\right)^{m}\prod_{l=0}^{N}(\ell_{l}(x))^{c_{l}}
=\underset{0 \leq j_{0},j_{1},\cdots, j_{N}}
{\sum_{j_{0}+j_{1}+\cdots+j_{N}=m}}
P(c)
\prod_{i=0}^{N}\ell_{i}(x)^{c_{i}-\alpha_{i}},
\end{align*}
where $P(c)$ is a certain sum of polynomials in the $c_{i}$ each of
degree $j_{i}$ in $c_{i}$, and where, $\alpha_{i}$ is given by
\begin{align}
\label{alpha}
\alpha_{i}=j_{i}+\cdots+j_{N}.
\end{align}

We shall describe $P(c)$ by using a combinatorial construction, a type
of tableau.  A tableau will consist of a specified number of columns
of blank entries each of a specified length.  We shall construct a
tableau on any such ``grid'' of blanks by filling in each blank with
nonnegative numbers beginning at the top of each column and moving
down.  Each new entry can be any nonnegative number subject to two
restrictions.  First, the numbers must strictly ascend as one descends
a column and second, the each entry must be less than or equal to the
number of entries above and to the right (not necessarily above).  So
for example,
$$\shortstack[c]{0\\2 \\8 \\11} \quad 
\shortstack[c]{5 \\6 \\7} \quad
\shortstack[c]{0\\1 \\2 \\3\\ 4}$$
is a tableaux.  But  
$$\shortstack[c]{0\\8 \\2 \\11} \quad 
\shortstack[c]{3 \\5 \\6} \quad
\shortstack[c]{0\\1 \\2 \\3\\ 4}\qquad \text{and} \qquad
\shortstack[c]{0\\2 \\8 \\11} \quad 
\shortstack[c]{3 \\5 \\20} \quad
\shortstack[c]{0\\1 \\2 \\3\\ 4}
$$
are not.

We shall consider all tableaux of a particular shape and assign to
that shape a polynomial in as many variables, $x_{i}$, as there are
columns.  We shall denote this polynomial by 
$$[m_{1},m_{2},\cdots,m_{n}]_{1}(x_{i}),$$ 
where the shape is $n$ columns of heights $m_{1}$
on the left followed by $m_{2}$ next to the right etc., where $m_{i}
\geq 0$.  The polynomial is found by summing over all the tableau of
the given shape.  Each summand is found by inserting ``$x_{i}-$'' in
each entry of the $i$-th column (reading left to right) and
multiplying all entries.  
\begin{prop}
\label{prop:itlogintermediate}
For all $m \geq 0$, $c_{i} \in \mathbb{C}$
\begin{align}
\left(\frac{d}{dx}\right)^{m}\prod_{l=0}^{N}(\ell_{l}(x))^{c_{l}}
=\underset{0 \leq j_{0},j_{1},\cdots, j_{N}}
{\sum_{j_{0}+j_{1}+\cdots+j_{N}=m}}
[j_{0},j_{1},\cdots, j_{N}]_{1}(c_{i})
\prod_{i=0}^{N}\ell_{i}(x)^{c_{i}-\alpha_{i}} \label{eq:D}.
\end{align}
\end{prop}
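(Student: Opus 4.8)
The plan is to compute the action of $(d/dx)^m$ on the monomial directly from the expansion (\ref{def:Tlog}) as a sum over words in the operators $T_i$, group the resulting terms by the number of occurrences $j_i$ of each letter $i$, and then recognize the coefficient attached to a fixed multiplicity vector $(j_0,\ldots,j_N)$ as the tableau polynomial $[j_0,\ldots,j_N]_1(c_i)$. First I would record the single-step action: applying $T_i$ to a monomial $\prod_l \ell_l(x)^{c_l}$ multiplies by the current exponent $c_i$ of $\ell_i(x)$, lowers that exponent by one, and lowers each of the exponents of $\ell_0(x),\ldots,\ell_{i-1}(x)$ by one, while leaving $\ell_{i+1}(x),\ell_{i+2}(x),\ldots$ untouched.

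Next, reading a word $T_{i_1}\cdots T_{i_m}$ from the right (the order of application) as letters $a_1,\ldots,a_m$, I would track two quantities. The exponent of $\ell_i(x)$ is lowered by one at every application of an operator $T_{i'}$ with $i'\geq i$; hence after the whole word the exponent of $\ell_i(x)$ is $c_i-\alpha_i$, with $\alpha_i=j_i+\cdots+j_N$ as in (\ref{alpha}), independently of the particular word. The scalar produced at the step where $T_i$ acts, say the $t$-th step, is $c_i-e_t$, where $e_t=|\{s<t : a_s\geq i\}|$ counts the earlier-acting operators of index at least $i$. This already yields (\ref{eq:D}) with $P(c)=\sum \prod_{t=1}^m (c_{a_t}-e_t)$, the sum running over all words with $j_i$ occurrences of each $i$, so everything reduces to identifying this sum with $[j_0,\ldots,j_N]_1(c_i)$.

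Then I would set up the correspondence sending a word to the filling whose $i$-th column records, top to bottom in order of application, the numbers $e_t$ at the occurrences of the letter $i$. Two successive occurrences of $i$ have strictly increasing $e$, since the earlier one is itself counted, giving strict ascent down each column; and since $e_t=(\text{occurrences of }i\text{ before }t)+(\text{occurrences of letters}>i\text{ before }t)$, one has $e_t\leq(\text{entries above in column }i)+(\text{total entries in the columns to the right})$, which is exactly the second tableau restriction. Thus every word produces a valid tableau, and the weight $\prod_t(c_{a_t}-e_t)$ equals the tableau summand $\prod_i\prod_{\text{entries }e}(c_i-e)$, so the weights match column by column.

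The crux, and the step I expect to be the main obstacle, is that this correspondence is a bijection onto the valid tableaux of shape $(j_0,\ldots,j_N)$; I would prove this by induction on $m$ through a ``unique peelable column'' lemma. Call column $i$ peelable if its bottom entry equals its maximal allowed value $\alpha_i-1$ and deleting that entry leaves a valid tableau of shape $(j_0,\ldots,j_i-1,\ldots,j_N)$; since deleting the bottom of column $i$ lowers by one the upper bound imposed on every entry in the columns to its left, this second condition amounts to requiring that no entry in a column $i'<i$ sits at its own maximum. Uniqueness is then immediate: if $i<i^*$ were both peelable, peelability of $i^*$ would forbid any at-maximum entry in the columns below $i^*$, contradicting that the bottom of column $i$ is at its maximum. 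For existence I would take the least index $i_0$ whose column contains an at-maximum entry (the rightmost nonempty column is always such, its entries being forced to equal $0,1,\ldots,j-1$); strict ascent then forces the bottom of column $i_0$ to be at its maximum as well, and minimality supplies the condition on the columns to the left, so $i_0$ is peelable. Peeling the corresponding letter onto the end of the word matches the ``remove the last letter'' decomposition of words with $j_i$ occurrences of each $i$, the peeled entry $\alpha_i-1$ corresponding to the final factor $c_i-(\alpha_i-1)$; the induction then closes, giving $P(c)=[j_0,\ldots,j_N]_1(c_i)$ and hence (\ref{eq:D}).
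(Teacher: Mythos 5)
Your proof is correct, and it shares the paper's overall frame --- expand $\left(\frac{d}{dx}\right)^{m}$ via (\ref{def:Tlog}) into words in the $T_{i}$, send each word to the tableau whose $i$-th column lists the values $e_{t}$ at the occurrences of $T_{i}$, and match weights factor by factor --- but you establish the crucial bijectivity by a genuinely different mechanism. The paper enumerates the words of fixed content constructively: write down all the $T_{N}$'s first, then insert the $T_{N-1}$'s from right to left, and so on; since the entries of column $i$ depend only on the letters of index $\geq i$, the columns are labelled independently, and this independence is exactly what the paper then harvests, essentially for free, as Proposition \ref{prop:inditstirl1}. You instead induct on $m$ through a last-letter decomposition: the final operator $T_{i}$ applied contributes the factor $c_{i}-(\alpha_{i}-1)$ and appends the maximal entry $\alpha_{i}-1$ to the bottom of column $i$, and your ``unique peelable column'' lemma (existence via the least column containing an at-maximum entry together with the strict-ascent squeeze forcing its bottom entry to the maximum; uniqueness because deletion-validity for a peelable column forbids at-maximum entries in the columns to its left) shows the valid tableaux of shape $(j_{0},\dots,j_{N})$ decompose in precisely the same way, so both sides satisfy the recursion $P_{(j_{0},\dots,j_{N})}=\sum_{i:\, j_{i}>0}\bigl(c_{i}-(\alpha_{i}-1)\bigr)P_{(j_{0},\dots,j_{i}-1,\dots,j_{N})}$ with the same base case. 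Your supporting steps check out, in particular the bound $e_{t}\leq$ (entries above) $+$ (entries in columns to the right) and the observation that deleting the bottom of column $i$ lowers by one exactly the bounds in the columns to its left. As for what each approach buys: the paper's insertion argument is shorter and makes the column-independence manifest, which your recursion entangles (you would need a further argument to recover Proposition \ref{prop:inditstirl1} from it, though the paper's Section 9 shows it also falls out of equating expansions); your peeling argument is a fully rigorous replacement for the paper's ``it is easy to see,'' and it is precisely the general-$N$ form of the last-derivative recurrences recorded in Section 8 --- at $N=1$ your recursion is equivalent to $M(m,j)=(m-1-s)M(m-1,j)+M(m-1,j-1)$. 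One small point worth adding: in (\ref{def:Tlog}) the indices range over all $i\geq 0$, so you should note that any word containing a letter $i>N$ annihilates the monomial --- at the first such letter the produced scalar is the then-current exponent of $\ell_{i}(x)$, which is still $0$ --- justifying your restriction to words in the letters $0,\dots,N$.
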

\begin{proof}
It is easy to see that each summand of (\ref{def:Tlog}) when acting on
$\prod_{l=0}^{N}(\ell_{l}(x))^{c_{l}}$ will yield a coefficient
corresponding to one of the tableaux.  If one 'constructs' each term
of (\ref{def:Tlog}) for a fixed $j_{0},\dots, j_{N}$ by first writing
down all the $T_{N}$'s and then inserting the $T_{N-1}$'s from right
to left etc. in all possible ways then it is easy to see the relevant
one-to-one correspondence between the coefficients yielded by each
term of (\ref{def:Tlog}) and the tableaux of a fixed shape given by
the $j_{i}$'s.
\end{proof}
Notice the way that writing down a term of (\ref{def:Tlog})
corresponded to filling in a tableau of a fixed shape.  The labelling
of each column depended only on the total number of entries there were
in the columns to its right.  That is, when constructing a tableau of
fixed shape each column is labelled independently of the others.  For
instance, the rightmost column of a tableau is completely determined
by its length alone.  Therefore the piece of the `tableau polynomial'
due to the rightmost column may be factored out from all terms
corresponding to a fixed tableau shape.  If we look at the remaining
factor of $[m_{1},m_{2}, \cdots ,m_{n}]_{1}(x_{i})$ and set all the
variables to zero, we get an integer which we shall call
$$(m_{1},m_{2}, \cdots ,m_{n-1};m_{n})_{1}.$$  
The independence of labeling gives immediately the following.
\begin{prop}
\label{prop:inditstirl1}
For $m_{1}, \dots, m_{n} \geq 0$,
$$(m_{1},m_{2}, \cdots, m_{n-1};m_{n})_{1} =(m_{1};m_{2} +\cdots +
m_{n})_{1}\cdots(m_{n-2};m_{n-1}+m_{n})_{1}(m_{n-1};m_{n})_{1}.$$
\end{prop}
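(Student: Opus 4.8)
The plan is to turn the informal remark that ``each column is labelled independently of the others'' into an honest factorization of the tableau polynomial, and then to read off the claimed product directly from the definition of $(m_1,\ldots,m_{n-1};m_n)_1$. The decisive structural fact is the one already isolated just before the statement: in filling a column of the grid, the only constraint coming from the rest of the tableau is the bound ``less than or equal to the number of entries above and to the right,'' and the ``to the right'' part of this count is simply $R_i:=m_{i+1}+\cdots+m_n$, which is fixed once the shape is fixed. Hence the set of admissible fillings of the $i$-th column depends only on its height $m_i$ and on $R_i$, and not at all on the actual numbers entered in the other columns.

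First I would make this precise by introducing, for each $i$, the single-column factor
\[
C_i(x_i)=\sum\ \prod_{k=1}^{m_i}(x_i-t_k),
\]
the sum ranging over all strictly increasing sequences $0\le t_1<\cdots<t_{m_i}$ subject to the tableau bound $t_k\le (k-1)+R_i$. Because the admissibility conditions on distinct columns are mutually independent and each column carries its own variable, the set of tableaux of a fixed shape is the product of the sets of admissible single-column fillings, so summing $\prod_i\prod_k(x_i-t_k)$ over all tableaux factors as a product over columns, giving
\[
[m_1,\ldots,m_n]_1(x_i)=\prod_{i=1}^{n}C_i(x_i).
\]
Next I would observe that for the rightmost column $R_n=0$, so strict ascent together with $t_k\le k-1$ forces the unique filling $t_k=k-1$; thus $C_n(x_n)=x_n(x_n-1)\cdots(x_n-m_n+1)$ is exactly the ``piece due to the rightmost column'' that the definition factors out. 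The remaining factor is $\prod_{i=1}^{n-1}C_i(x_i)$, and setting all the variables to zero gives, by definition,
\[
(m_1,\ldots,m_{n-1};m_n)_1=\prod_{i=1}^{n-1}C_i(0).
\]
Finally I would identify each $C_i(0)$ with a two-variable building block: applying the same reasoning to the two-column shape of heights $m_i$ and $R_i$, whose rightmost column again has a forced filling, shows that $(m_i;R_i)_1$ is precisely $C_i(0)$ with $R_i=m_{i+1}+\cdots+m_n$. Substituting yields the asserted identity.

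The genuinely delicate point — the one the word ``independence'' is quietly doing all the work for — is the factorization step, so I would spend the care there. Two things must be checked: that the per-entry bound really decouples across columns (that is, that the admissible filling of column $i$ never depends on which entries are chosen to its right, only on how many there are), and that the notational conventions match, namely that the single-column factor extracted from the full shape is the \emph{same} object as the two-column quantity $(m_i;R_i)_1$ obtained from the definition. Both reduce to the observation that a column whose ``count to the right'' is $0$ has a forced filling, so that deleting the rightmost column of any shape is a clean operation leaving the other column factors untouched. Everything else is bookkeeping; once the product formula $[m_1,\ldots,m_n]_1(x_i)=\prod_i C_i(x_i)$ is in hand, the proposition is immediate.
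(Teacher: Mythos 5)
Your proof is correct and takes essentially the same route as the paper, which justifies the proposition in one line by the observation that each column of a fixed-shape tableau is labelled independently of the others (the admissible entries of column $i$ depending only on its height $m_i$ and on the count $R_i=m_{i+1}+\cdots+m_n$ of entries to its right), so that the tableau polynomial factors as a product of single-column factors. Your write-up merely makes explicit what the paper leaves implicit --- the product-set factorization $[m_1,\ldots,m_n]_1(x_i)=\prod_i C_i(x_i)$, the forced filling $0,1,\ldots,m_n-1$ of the rightmost column, and the identification $C_i(0)=(m_i;R_i)_1$ via the two-column shape --- all of which check out against the paper's definitions.
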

\begin{flushright} $\square$ \end{flushright}
It is easy to calculate from the definition that
\begin{align*}
  (m-j:j)_{1}=(-1)^{m-j}\sum_{0 \leq t_{1} < t_{2} < \cdots < t_{m-j}<
    m}t_{1} \cdots t_{m-j}.
\end{align*}
By (\ref{stirl1}) we have
\begin{align*}
(m-j:j)_{1}=(-1)^{m-j}
\St{m}{j}
%{m \atopwithdelims[] j}
.
\end{align*} 
It is easy to see that
%\begin{remark} \rm
%The reader may now recognize a formula for the Stirling numbers of the
%first kind.  Indeed we have
%$$(m;n)=(-1)^{m}
%\begin{bmatrix}
%m+n \\
%n \\
%\end{bmatrix},$$
%a fact which will actually be reproven below, but for ease of notation
%we will anticipate the result.
%\end{remark}
we can specialize \eqref{eq:D} to get
\begin{align*}
\left(\frac{d}{dx}\right)^{m}\ell_{N}(x)^{r}
&=\underset{0 \leq j_{0},\dots, j_{N}}
{\sum_{j_{0}+\cdots+j_{N}=m}}
j_{N}!\binom{r}{j_{N}}
(j_{0}, \cdots ,j_{N-1};j_{N})_{1}
\ell_{N}(x)^{r}\prod_{i=0}^{N}\ell_{i}(x)^{-\alpha_{i}}\\
&=\underset{0 \leq j_{0},\dots, j_{N}}
{\sum_{j_{0}+\cdots+j_{N}=m}}
j_{N}!\binom{r}{j_{N}}
\left(\prod_{i=0}^{N-1}
(j_{i};\alpha_{i+1})_{1}\right)
\ell_{N}(x)^{r}\prod_{i=0}^{N}\ell_{i}(x)^{-\alpha_{i}}.\\
&=\underset{0 \leq j_{0},\dots, j_{N}}
{\sum_{j_{0}+\cdots+j_{N}=m}}
j_{N}!\binom{r}{j_{N}}
(-1)^{\alpha_{0}-\alpha_{N}}
\left(\prod_{i=0}^{N-1}
\St{\alpha_{i}}{\alpha_{i+1}}
%{\alpha_{i} \atopwithdelims[] \alpha_{i+1}}
\right)
\ell_{N}(x)^{r}\prod_{i=0}^{N}\ell_{i}(x)^{-\alpha_{i}}.
\end{align*}
Thus we get 
\begin{theorem} For $r \in \mathbb{C}$, $N \geq 0$,
\label{itlogexpfrst}
\begin{align*}
\ell_{N}(x+y)^{r}
%=e^{y\frac{d}{dx}}\ell_{N}(x)^{r}
=\sum_{m \geq 0}\frac{y^{m}}{m!}
\underset{0 \leq j_{0},j_{1},\cdots, j_{N}}
{\sum_{j_{0}+j_{1}+\cdots+j_{N}=m}}
j_{N}!\binom{r}{j_{N}}
(-1)^{\alpha_{0}-\alpha_{N}}
\left(\prod_{i=0}^{N-1}
\St{\alpha_{i}}{\alpha_{i+1}}
%{\alpha_{i} \atopwithdelims[] \alpha_{i+1}}
\right)
\ell_{N}(x)^{r}\prod_{i=0}^{N}\ell_{i}(x)^{-\alpha_{i}}.
\end{align*}
\end{theorem}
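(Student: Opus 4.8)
The plan is to recognize that the left-hand side is nothing but $e^{y\frac{d}{dx}}\ell_{N}(x)^{r}$ and to reduce the entire statement to the single computation of $\left(\frac{d}{dx}\right)^{m}\ell_{N}(x)^{r}$ carried out in the displays just above. First I would invoke the iterated exponential/logarithmic formal Taylor theorem applied to the element $p(x)=\ell_{N}(x)^{r}\in\mathbb{C}\{[\ell]\}$, which by definition of $\ell_{N}(x+y)^{r}$ gives $\ell_{N}(x+y)^{r}=e^{y\frac{d}{dx}}\ell_{N}(x)^{r}$. Expanding the formal exponentiated derivation as $e^{y\frac{d}{dx}}=\sum_{m\geq 0}\frac{y^{m}}{m!}\left(\frac{d}{dx}\right)^{m}$ then reduces the theorem to reading off the coefficient of $\frac{y^{m}}{m!}$, that is, to establishing the closed form for $\left(\frac{d}{dx}\right)^{m}\ell_{N}(x)^{r}$.

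For that closed form I would start from Proposition \ref{prop:itlogintermediate}, specializing the exponents to $c_{N}=r$ and $c_{l}=0$ for $0\leq l<N$, so that the tableau polynomial $[j_{0},\dots,j_{N}]_{1}(c_{i})$ collapses. The rightmost column of any tableau of shape $(j_{0},\dots,j_{N})$ is forced, by the strict-ascent and the bound constraints, to read $0,1,\dots,j_{N}-1$ from the top; inserting ``$r-$'' into each of its entries and multiplying yields $r(r-1)\cdots(r-j_{N}+1)=j_{N}!\binom{r}{j_{N}}$, which factors out of every summand. Setting to zero the remaining variables, those attached to columns $0,\dots,N-1$ whose exponents are now all zero, turns the leftover factor into the integer $(j_{0},\dots,j_{N-1};j_{N})_{1}$, so that $\left(\frac{d}{dx}\right)^{m}\ell_{N}(x)^{r}=\sum_{j_{0}+\cdots+j_{N}=m}j_{N}!\binom{r}{j_{N}}(j_{0},\dots,j_{N-1};j_{N})_{1}\,\ell_{N}(x)^{r}\prod_{i=0}^{N}\ell_{i}(x)^{-\alpha_{i}}$.

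Next I would break the integer $(j_{0},\dots,j_{N-1};j_{N})_{1}$ into single-column-pair pieces using the independence-of-labelling identity of Proposition \ref{prop:inditstirl1}, obtaining $\prod_{i=0}^{N-1}(j_{i};\alpha_{i+1})_{1}$ with $\alpha_{i+1}=j_{i+1}+\cdots+j_{N}$. Each factor is then rewritten via the computed value $(m-j:j)_{1}=(-1)^{m-j}\St{m}{j}$, taking $m=\alpha_{i}$ and $j=\alpha_{i+1}$ so that the index shift $j_{i}=\alpha_{i}-\alpha_{i+1}$ matches $m-j$; this produces $(-1)^{\alpha_{i}-\alpha_{i+1}}\St{\alpha_{i}}{\alpha_{i+1}}$. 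The accumulated sign is $(-1)^{\sum_{i=0}^{N-1}(\alpha_{i}-\alpha_{i+1})}$, whose exponent telescopes to $\alpha_{0}-\alpha_{N}$, giving the claimed $(-1)^{\alpha_{0}-\alpha_{N}}\prod_{i=0}^{N-1}\St{\alpha_{i}}{\alpha_{i+1}}$. Substituting this expression for $\left(\frac{d}{dx}\right)^{m}\ell_{N}(x)^{r}$ back into the exponential expansion and summing over $m$ yields the theorem.

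The genuinely delicate content lives entirely in the combinatorial bookkeeping already packaged in Propositions \ref{prop:itlogintermediate} and \ref{prop:inditstirl1}; granting those, the only places I would watch carefully are the forced shape, and hence the evaluation, of the rightmost column, the index shift $j_{i}=\alpha_{i}-\alpha_{i+1}$ when matching the two-column quantity $(j_{i};\alpha_{i+1})_{1}$ to $(m-j:j)_{1}$, and the telescoping of the sign exponent. I expect this sign telescoping, together with keeping the $\alpha_{i}$ conventions straight, to be the main (though still routine) obstacle, since everything else is a direct substitution.
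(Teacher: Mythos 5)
Your proposal is correct and takes essentially the same route as the paper's own proof: the paper likewise specializes Proposition~\ref{prop:itlogintermediate} to $c_{N}=r$, $c_{l}=0$ for $l<N$ (the forced rightmost column giving $j_{N}!\binom{r}{j_{N}}$), factors $(j_{0},\dots,j_{N-1};j_{N})_{1}$ via Proposition~\ref{prop:inditstirl1}, rewrites each $(j_{i};\alpha_{i+1})_{1}$ as $(-1)^{\alpha_{i}-\alpha_{i+1}}\St{\alpha_{i}}{\alpha_{i+1}}$ with the telescoping sign $(-1)^{\alpha_{0}-\alpha_{N}}$, and then sums against $\frac{y^{m}}{m!}$ through the formal Taylor theorem. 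Your extra care over the forced shape of the rightmost column and the index shift $j_{i}=\alpha_{i}-\alpha_{i+1}$ merely spells out steps the paper labels as ``easy to see.''
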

\begin{flushright} $\square$ \end{flushright}

\subsection{Case $N=-1$, the case ``${\rm exp}\;x$''}
We shall next handle the cases $N \leq 0$ step by step, first handling
the cases $N=-1$ and $N=-2$ separately.  The reader may skip ahead to
Subsection \ref{sectioncaseexp} without any loss of generality.

  For $n,m \geq 0$ it is easy to
see how the $m$-th power of $\frac{d}{dx}$ acts on $\ell_{-1}(x)^{r}$
because $\frac{d}{dx}\ell_{-1}(x)^{r}=r\ell_{-1}(x)^{r}$ is a
monomial.  So we have
$\frac{d}{dx}^{m}\ell_{-1}(x)^{r}=r^{m}\ell_{-1}(x)$ which gives the
following:
\begin{prop}
For all $r \in \mathbb{C}$
\begin{align*}
e^{y\frac{d}{dx}}\ell_{-1}(x)^{r}
&=\sum_{m \geq 0}\frac{(ry)^{m}}{m!}\ell_{-1}(x)^{r}\\
&=\ell_{-1}(x)^{r}e^{ry}.
\end{align*}
\end{prop}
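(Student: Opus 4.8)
The plan is to exploit the fact that $\ell_{-1}(x)^{r}$ is an eigenvector of the derivation $\frac{d}{dx}$, which collapses the whole computation to a single eigenvalue observation rather than the tableau machinery used for the cases $N \geq 0$.

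First I would record the base computation directly from the definition of $\frac{d}{dx}$. Specializing the rule for $\frac{d}{dx}\ell_{-n}(x)^{r}$ to $n=1$, the product $\prod_{i=-1}^{-1}\ell_{i}(x)$ collapses to the single factor $\ell_{-1}(x)$, so that
\begin{align*}
\frac{d}{dx}\ell_{-1}(x)^{r}=r\ell_{-1}(x)^{r-1}\ell_{-1}(x)=r\ell_{-1}(x)^{r}.
\end{align*}
Hence $\ell_{-1}(x)^{r}$ is an eigenvector with eigenvalue $r$, the formal shadow of the classical fact that $e^{rx}$ is an eigenfunction of differentiation, in keeping with the ``secret meaning'' of $\ell_{-1}$ as $\exp x$ recorded in Remark \ref{rem:logexpsecret}.

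Next I would iterate: a one-line induction on $m$ gives
\begin{align*}
\left(\frac{d}{dx}\right)^{m}\ell_{-1}(x)^{r}=r^{m}\ell_{-1}(x)^{r}
\end{align*}
for all $m \geq 0$, since each further application of $\frac{d}{dx}$ only multiplies by another factor of $r$ while leaving $\ell_{-1}(x)^{r}$ unchanged. Substituting this into the defining expansion $e^{y\frac{d}{dx}}=\sum_{m \geq 0}y^{m}\left(\frac{d}{dx}\right)^{m}/m!$ and pulling the constant factor $\ell_{-1}(x)^{r}$ out of the sum yields
\begin{align*}
e^{y\frac{d}{dx}}\ell_{-1}(x)^{r}=\ell_{-1}(x)^{r}\sum_{m \geq 0}\frac{(ry)^{m}}{m!}.
\end{align*}
Recognizing the remaining series as the formal exponential \eqref{eq:formalexp} (with $X=r$) identifies it as $e^{ry}$, giving the stated result.

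There is essentially no obstacle in this case; the only point requiring care is the correct reading of the collapsing product $\prod_{i=-1}^{-1}$ in the derivation rule. This is precisely what makes $N=-1$ so much simpler than the iterated-logarithm cases, where $\frac{d}{dx}$ fails to send a monomial to a scalar multiple of itself and the full tableau expansion of Proposition \ref{prop:itlogintermediate} becomes necessary.
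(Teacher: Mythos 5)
Your proof is correct and takes essentially the same approach as the paper: both reduce to the eigenvector observation $\frac{d}{dx}\ell_{-1}(x)^{r}=r\ell_{-1}(x)^{r}$, iterate to get $\left(\frac{d}{dx}\right)^{m}\ell_{-1}(x)^{r}=r^{m}\ell_{-1}(x)^{r}$, and substitute into the exponential series. Your version merely makes explicit the collapsing product $\prod_{i=-1}^{-1}\ell_{i}(x)$ in the derivation rule, which the paper leaves implicit.
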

\begin{flushright} $\qed$ \end{flushright}

\subsection{Case $N=-2$, the case ``${\rm exp}\;{\rm exp}\;x$''}

We have for $r,s \in \mathbb{C}$
\begin{align*}
\frac{d}{dx}\ell_{-1}(x)^{s}\ell_{-2}(x)^{r}
=s\ell_{-1}(x)^{s}\ell_{-2}(x)^{r}+r\ell_{-1}(x)^{s+1}\ell_{-2}(x)^{r}.
\end{align*}
 Then define two linear operators $S_{0}$ and $S_{1}$ on
 $\mathbb{C}[\ell_{-1}(x),\ell_{-2}(x)]$ by
\begin{align*}
S_{0}\ell_{-1}(x)^{s}\ell_{-2}(x)^{r}
=s\ell_{-1}(x)^{s}\ell_{-2}(x)^{r}\\
S_{1}\ell_{-1}(x)^{s}\ell_{-2}(x)^{r}
=r\ell_{-1}(x)^{s+1}\ell_{-2}(x)^{r}.
\end{align*}
Then 
\begin{align*}
\left(\frac{d}{dx}\right)^{m}\ell_{-1}(x)^{s}\ell_{-2}(x)^{r}
=(S_{0}+S_{1})^{m}\ell_{-1}(x)^{s}\ell_{-2}(x)^{r}.
\end{align*}
It is not hard to see that
\begin{align*}
\left(\frac{d}{dx}\right)^{m}\ell_{-1}(x)^{s}\ell_{-2}(x)^{r}
&=\sum_{j=0}^{m}r^{j}
\left(\sum_{0 \leq t_{1} \leq t_{2} \leq \cdots \leq t_{m-j}\leq j}
(s+t_{1})\cdots(s+t_{m-j})
\right) \cdot \\
& \quad \cdot \ell_{-1}(x)^{s+j}\ell_{-2}(x)^{r},
\end{align*}
where the reader should think of $j$ as corresponding to the number of
$S_{1}$'s in a summand of the expansion of $(S_{0}+S_{1})^{m}$ and the
$t_{i}$'s as corresponding to the number of $S_{1}$'s to the right of
the $i$-th $S_{0}$.  It is now easy to get the following:
\begin{prop}
For all $r \in \mathbb{C}$
\begin{align}
\label{eq:method3 expexp}
e^{y\frac{d}{dx}}\ell_{-2}(x)^{r}
=\ell_{-2}(x)^{r}\sum_{m \geq 0}\frac{y^{m}}{m!}
\sum_{j=0}^{m}(r\ell_{-1}(x))^{j}
\sum_{0 \leq t_{1} \leq t_{2} \leq \cdots \leq t_{m-j}\leq j}
t_{1}\cdots t_{m-j}.
\end{align}
\end{prop}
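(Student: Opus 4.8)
The plan is to obtain the expansion directly from the operator identity for $\left(\frac{d}{dx}\right)^{m}\ell_{-1}(x)^{s}\ell_{-2}(x)^{r}$ displayed just above, by specializing the exponent $s$ to $0$ and then assembling the formal exponential as the sum against $y^{m}/m!$. No new combinatorics is needed: all of it is already packaged in that identity, so the work is purely bookkeeping.

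First I would put $s=0$ in the formula immediately preceding the proposition. Since $\ell_{-1}(x)^{0}=1$ in $\mathbb{C}\{[\ell]\}$, the left-hand side becomes $\left(\frac{d}{dx}\right)^{m}\ell_{-2}(x)^{r}$, every factor $(s+t_{i})$ reduces to $t_{i}$, and the surviving power of $\ell_{-1}(x)$ is $\ell_{-1}(x)^{j}$. This gives
\begin{align*}
\left(\frac{d}{dx}\right)^{m}\ell_{-2}(x)^{r}
=\ell_{-2}(x)^{r}\sum_{j=0}^{m}r^{j}\ell_{-1}(x)^{j}
\sum_{0 \leq t_{1} \leq \cdots \leq t_{m-j} \leq j}t_{1}\cdots t_{m-j},
\end{align*}
where I have already pulled the invariant factor $\ell_{-2}(x)^{r}$ to the front, since $\frac{d}{dx}$ never alters the exponent of $\ell_{-2}(x)$ in these terms.

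Next I would apply $e^{y\frac{d}{dx}}=\sum_{m \geq 0}\frac{y^{m}}{m!}\left(\frac{d}{dx}\right)^{m}$ term by term; this is purely the definition of the operator, and there is no convergence question because everything is a formal power series in $y$ with each coefficient a finite sum. Substituting the specialized identity, keeping $\ell_{-2}(x)^{r}$ out front, and collecting $r^{j}\ell_{-1}(x)^{j}=(r\ell_{-1}(x))^{j}$ yields exactly the asserted formula. The genuine content sits one step earlier, in the identity I am invoking: that expanding $(S_{0}+S_{1})^{m}$ and recording, for each length-$m$ word with $j$ occurrences of $S_{1}$, the counts $t_{i}$ of $S_{1}$'s lying to the right of each $S_{0}$ sets up a bijection with weakly increasing sequences $0 \leq t_{1} \leq \cdots \leq t_{m-j} \leq j$, each word contributing $r^{j}(s+t_{1})\cdots(s+t_{m-j})\ell_{-1}(x)^{s+j}\ell_{-2}(x)^{r}$. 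Granting that bijection, as the surrounding text does, the proposition is immediate, and I anticipate no real obstacle in its proof.
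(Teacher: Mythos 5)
Your proposal is correct and is essentially the paper's own proof: the text derives the identity for $\left(\frac{d}{dx}\right)^{m}\ell_{-1}(x)^{s}\ell_{-2}(x)^{r}$ via the $(S_{0}+S_{1})^{m}$ word-counting bijection and then passes to the proposition exactly by the specialization $s=0$ (equivalently, taking the bare monomial $\ell_{-2}(x)^{r}$) and summing against $\frac{y^{m}}{m!}$. You correctly locate the only genuine content in the preceding combinatorial identity, which the paper likewise takes as established before stating the proposition.
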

\begin{flushright} $\qed$ \end{flushright}

\subsection{Formal analytic expansions: 
Cases $N < 0$, ``iterated exponentials''}
\label{sectioncaseexp}
We begin by defining the following
operators:
\begin{align*}
T_{i}=\left(\prod_{j=1}^{i}\ell_{-j}(x)\right)
\frac{\partial}{\partial \ell_{-i}(x)} \qquad i > 0.
\end{align*}
Then for our purposes
\begin{align*}
\frac{d}{dx}=\sum_{i > 0}T_{i}.
\end{align*}
Therefore, 
\begin{align*}
\left(\frac{d}{dx}\right)^{k}=\sum_{i_{1},i_{2},\cdots, i_{k} > 0}
T_{i_{1}}T_{i_{2}}\cdots T_{i_{k}}.\end{align*} 
If we consider all the monomials with a fixed number of occurrences of
each $T_{i}$, and call this fixed number $j_{i}$, then we can
partially calculate to get
\begin{align*}
\left(\frac{d}{dx}\right)^{k}\prod_{l=1}^{N}(\ell_{-l}(x))^{c_{l}}
=\underset{0 \leq j_{1},\cdots, j_{N}}
{\sum_{j_{1}+\cdots+j_{N}=k}}
P(c)
\ell_{-N}(x)^{c_{N}}
\prod_{i=1}^{N-1}\ell_{-i}(x)^{c_{i}+\alpha_{i+1}},
\end{align*}
where $P(c)$ is a certain sum of polynomials in the $c_{i}$ each of
degree $j_{i}$ in $c_{i}$, and where, $\alpha_{i}$ is given by
(\ref{alpha}).
%for notational convenience, we
%have let
%\begin{align*}
%\alpha_{i}=\sum_{l=i+1}^{N}j_{l}.
%\end{align*}

We shall describe $P(c)$ by using a combinatorial construction, a type
of tableau.  A tableau will consist of a specified number of columns
of blank entries each of a specified length.  We shall construct a
tableau on any such ``grid'' of blanks by filling in each blank with
nonnegative numbers beginning at the top of each column and moving
down.  Each new entry can be any nonnegative number subject to two
restrictions.  First, the numbers must (non-strictly) ascend as one descends
a column and second the entry in each column must be less than or
equal to the number of entries to the right (not necessarily
above).  So for example,
$$\shortstack[c]{0\\2 \\6 \\8} \quad 
\shortstack[c]{4 \\5 \\5} \quad
\shortstack[c]{0\\0 \\0 \\0\\ 0}$$
is a tableaux.  But  
$$\shortstack[c]{0\\8 \\2 \\8} \quad 
\shortstack[c]{3 \\5 \\5} \quad
\shortstack[c]{0\\0 \\0 \\0\\ 0}\qquad \text{and} \qquad
\shortstack[c]{0\\2 \\8 \\8} \quad 
\shortstack[c]{3 \\5 \\6} \quad
\shortstack[c]{0\\0 \\0 \\0\\ 0}
$$
are not.

We shall consider all tableaux of a particular shape and assign to
that shape a polynomial in as many variables, $x_{i}$, as there are
columns.  We shall denote this polynomial by $[m_{1},m_{2},\cdots,
m_{n}]_{2}(x_{i})$ where the shape is $n$ columns of heights $m_{1}$
on the left followed by $m_{2}$ next to the right etc.  The polynomial
is found by summing over all the tableau of the given shape.  Each
summand is found by inserting ``$x_{i}+$'' in each entry of the $i$-th
column and multiplying all entries.  Using similar reasoning to that
in the proof of Proposition \ref{prop:itlogintermediate} we get
\begin{align}
\left(\frac{d}{dx}\right)^{k}\prod_{l=1}^{N}(\ell_{-l}(x))^{c_{l}}
=\underset{0 \leq j_{1},\cdots, j_{N}}
{\sum_{j_{1}+\cdots+j_{N}=k}}
[j_{1},\cdots, j_{N}]_{2}(c_{i})
\ell_{-N}(x)^{c_{N}}
\prod_{i=1}^{N-1}\ell_{-i}(x)^{c_{i}+\alpha_{i+1}} \label{eq:De}.
\end{align}
We shall specialize this calculation, but shall first revisit the
tableaux.  Notice that the rightmost column of a tableau is completely
determined by its length alone.  Therefore the piece of the tableau
polynomial due to the rightmost column may be factored out.  If look
at the remaining factor of $[m_{1},m_{2}, \cdots ,m_{n}]_{2}(x_{i})$
and set all the variables to zero we get an integer which we shall
call $(m_{1},m_{2}, \cdots ,m_{n-1};m_{n})_{2}$.  Notice that when
constructing a tableau of fixed shape each column is labelled
independently of the others.  Thus we have
\begin{prop}
For $m_{1}, \dots,m_{n} \geq 0$,
\begin{align*}
(m_{1},m_{2}, \cdots, m_{n-1};m_{n})_{2} =(m_{1};m_{2} +\cdots
+m_{n})_{2}\cdots(m_{n-2};m_{n-1}+m_{n})_{2}(m_{n-1};m_{n})_{2}.
\end{align*}
\label{prop:stirl2ind}
\end{prop}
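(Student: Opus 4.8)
The plan is to prove the factorization of $(m_1,\dots,m_{n-1};m_n)_2$ by exploiting the same ``column independence'' observation already used for the first kind in Proposition~\ref{prop:inditstirl1}, and which is stated explicitly in the paragraph preceding the current proposition. The key structural fact is that when one fills in a tableau of fixed shape $[m_1,\dots,m_n]_2$, the rule constraining the entries of the $i$-th column is that each entry be less than or equal to the number of entries strictly to its right, which equals $m_{i+1}+\cdots+m_n$, together with the non-strict ascending condition \emph{within} that column. Crucially, this constraint depends only on the height $m_i$ of the column and on the \emph{total} number of boxes lying to its right, namely $\alpha_{i+1}=m_{i+1}+\cdots+m_n$, but \emph{not} on how those boxes are arranged into columns nor on how they are filled. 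Hence the choices of entries in distinct columns are completely independent of one another.

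First I would make precise the definition of $(m_1,\dots,m_{n-1};m_n)_2$: it is obtained from $[m_1,\dots,m_n]_2(x_i)$ by factoring out the rightmost-column contribution and then setting every variable $x_i$ to zero. After setting $x_i=0$, the ``$x_i+$'' insertion in each box of the $i$-th column just leaves the entry itself, so each surviving summand is a product of the chosen (nonnegative integer) entries over all boxes in columns $1$ through $n-1$, summed over all legal fillings. Because the legality of the $i$-th column's filling is governed solely by the pair $(m_i,\,\alpha_{i+1})$, the sum over all legal fillings factors as a product over columns $i=1,\dots,n-1$ of the per-column sums, and the per-column sum for column $i$ is by definition precisely $(m_i;\,m_{i+1}+\cdots+m_n)_2=(m_i;\alpha_{i+1})_2$. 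This yields the claimed product
\[
(m_1,\dots,m_{n-1};m_n)_2=(m_1;m_2+\cdots+m_n)_2\cdots(m_{n-1};m_n)_2.
\]

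Concretely, the argument I would write is: the set of legal tableaux of shape $[m_1,\dots,m_n]_2$ is in bijection with the product set $\prod_{i=1}^{n}(\text{legal fillings of column }i)$, since no constraint couples two columns; summing the multiplicative weight over this product set turns the sum of products into a product of sums, which is exactly the statement once the rightmost ($i=n$) factor is stripped off. I would phrase this as ``since each column is labelled independently of the others, the sum over all tableaux of a fixed shape factors as a product of the sums over the individual columns,'' essentially mirroring the one-line justification given for the first-kind case.

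The only genuinely delicate point — and the step I expect to require the most care — is verifying that the per-column constraint really does depend only on $(m_i,\alpha_{i+1})$ and \emph{not} on the finer data of the columns to the right. This is where the two tableau rules must be read carefully: the ``less than or equal to the number of entries to the right (not necessarily above)'' condition must count total boxes to the right (so it is insensitive to their column structure), and the within-column ascending condition is manifestly internal to the column. Once one confirms that these are the only constraints, independence is immediate and the factorization is formal. Since this is the exact analogue of Proposition~\ref{prop:inditstirl1}, whose proof was accepted as ``immediate'' from the independence of labeling, I would keep the write-up correspondingly short, and indeed the paper terminates the statement with a $\square$, indicating that no further argument beyond this independence observation is deemed necessary.
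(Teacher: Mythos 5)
Your proposal is correct and takes essentially the same route as the paper: the paper's entire proof is the observation that, for the second-kind tableaux, each column is labelled independently of the others (the constraint on a column of height $m_i$ depends only on the total number $m_{i+1}+\cdots+m_n$ of boxes to its right), so the sum over tableaux of fixed shape factors column by column. You have merely made explicit the bijection with the product of per-column fillings and the resulting sum-of-products-to-product-of-sums step that the paper leaves implicit.
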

\begin{flushright} $\square$ \end{flushright}
It is easy to see that
\begin{align*}
(m-n;n)_{2}=
\sum_{0 \leq i_{1} \leq i_{2} \leq \cdots  \leq i_{m-n} 
\leq n}i_{1}i_{2}\cdots i_{m-n}.
\end{align*}
By (\ref{stirl2}) we have,
\begin{align*}
(m-n;n)_{2}=\st{m}{n}.
\end{align*}
Now we can specialize \eqref{eq:De} to get for $N \geq 0$
\begin{align*}
\left(\frac{d}{dx}\right)^{k}\ell_{-N}(x)^{r}
&=\underset{0 \leq j_{1},j_{2},\cdots, j_{N}}
{\sum_{j_{1}+j_{2}+\cdots+j_{N}=m}}
r^{j_{N}}
(j_{1}, \cdots, j_{N-1};j_{N})_{2}
\ell_{-N}(x)^{r}
\prod_{i=1}^{N-1}\ell_{-i}(x)^{\alpha_{i+1}}\\
&=\underset{0 \leq j_{1},j_{2},\cdots, j_{N}}
{\sum_{j_{1}+\cdots+j_{N}=m}}
r^{j_{N}}
\left(\prod_{i=1}^{N-1}
(j_{i};\alpha_{i+1})_{2}
\right)
\ell_{-N}(x)^{r}
\prod_{i=1}^{N-1}\ell_{-i}(x)^{\alpha_{i+1}}\\
&=\underset{0 \leq j_{1},j_{2},\cdots, j_{N}}
{\sum_{j_{1}+\cdots+j_{N}=m}}
r^{j_{N}}
\left(\prod_{i=1}^{N-1}
\st{\alpha_{i}}{\alpha_{i+1}} 
\right)
\ell_{-N}(x)^{r}
\prod_{i=1}^{N-1}\ell_{-i}(x)^{\alpha_{i+1}}.
\end{align*}
Thus we get 
\begin{theorem} For $r \in \mathbb{C}$, $N \geq 0$,
\label{itexpfirstmeth}
\begin{align*}
\ell_{-N}(x+y)^{r}
%=e^{y\frac{d}{dx}}\ell_{-N}(x)^{r}
=\sum_{m \geq 0}\frac{y^{m}}{m!}
\underset{0 \leq j_{1},j_{2},\cdots, j_{N}}
{\sum_{j_{1}+\cdots+j_{N}=m}}
r^{j_{N}}
\left(\prod_{i=1}^{N-1}
\st{\alpha_{i}}{\alpha_{i+1}} 
\right)
\ell_{-N}(x)^{r}
\prod_{i=1}^{N-1}\ell_{-i}(x)^{\alpha_{i+1}}.
\end{align*}
\end{theorem}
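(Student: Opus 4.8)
The plan is to obtain the expansion by exponentiating in $y$ the action of $\frac{d}{dx}$ that was just computed. By the iterated exponential/logarithmic formal Taylor theorem applied to $p(x)=\ell_{-N}(x)^{r}$, together with the definition of $\ell_{-N}(x+y)^{r}$, we have $\ell_{-N}(x+y)^{r}=e^{y\frac{d}{dx}}\ell_{-N}(x)^{r}$, and by the definition of the formal exponentiated derivation this equals $\sum_{m\geq 0}\frac{y^{m}}{m!}\left(\frac{d}{dx}\right)^{m}\ell_{-N}(x)^{r}$. Thus the entire content of the theorem is a closed form for $\left(\frac{d}{dx}\right)^{m}\ell_{-N}(x)^{r}$; once this is in hand, passing to the $y$-series is purely formal, the factor $\frac{y^{m}}{m!}$ merely recording the power of $\frac{d}{dx}$.

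To compute that $m$-th power I would decompose $\frac{d}{dx}=\sum_{i>0}T_{i}$ with $T_{i}=\left(\prod_{j=1}^{i}\ell_{-j}(x)\right)\frac{\partial}{\partial\ell_{-i}(x)}$, expand $\left(\frac{d}{dx}\right)^{m}$ as a sum of words $T_{i_{1}}\cdots T_{i_{m}}$, and collect the words according to the multiplicity $j_{i}$ with which each $T_{i}$ occurs. Since each $T_{i}$ shifts exponents in a way depending only on these multiplicities, the exponent of each $\ell_{-i}(x)$ is governed by $\alpha_{i}$ as in \eqref{alpha}, and the scalar coefficient of each grouped term is exactly the tableau polynomial $[j_{1},\dots,j_{N}]_{2}(c_{i})$. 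The key structural claim, which I would verify via the bijection between words of fixed content and tableaux of fixed shape (built by writing the $T_{N}$'s first and inserting the remaining $T_{i}$'s from right to left), is precisely \eqref{eq:De}.

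Finally I would specialize \eqref{eq:De} to the single factor $\ell_{-N}(x)^{r}$, i.e.\ set $c_{N}=r$ and all other $c_{i}=0$. Because the rightmost column has no columns to its right, every one of its entries is forced to be $0$, so inserting ``$r+$'' there contributes exactly $r^{j_{N}}$ and factors out. The independence of the column labelings, recorded in Proposition \ref{prop:stirl2ind}, then factors the remaining coefficient as $\prod_{i=1}^{N-1}(j_{i};\alpha_{i+1})_{2}$, and I would identify each factor with a Stirling number of the second kind through $(m-n;n)_{2}=\st{m}{n}$, which follows from \eqref{stirl2}. Reassembling these pieces and reinstating $\frac{y^{m}}{m!}$ yields the stated formula.

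The main obstacle is not the $y$-exponentiation, which is formal, but rather the combinatorial bookkeeping underlying \eqref{eq:De}: setting up the correct tableau rules for the exponential case (non-strict ascent down each column, and each entry bounded by the number of entries to its right) and verifying that words in the $T_{i}$ of fixed content biject with tableaux of the corresponding shape. Granting \eqref{eq:De} and Proposition \ref{prop:stirl2ind}, the only remaining checks are the forced vanishing of the rightmost column (giving $r^{j_{N}}$) and the match $(m-n;n)_{2}=\st{m}{n}$ with \eqref{stirl2}; both are routine once the tableau formalism is in place.
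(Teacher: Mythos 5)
Your proposal is correct and follows essentially the same route as the paper: reduce via the formal Taylor theorem to computing $\left(\frac{d}{dx}\right)^{m}\ell_{-N}(x)^{r}$, decompose $\frac{d}{dx}=\sum_{i>0}T_{i}$, group words by content to get the tableau identity \eqref{eq:De}, then specialize $c_{N}=r$ (the rightmost column's entries being forced to $0$, yielding $r^{j_{N}}$) and factor via Proposition \ref{prop:stirl2ind} together with $(m-n;n)_{2}=\st{m}{n}$ from \eqref{stirl2}. No gaps; this matches the paper's argument step for step.
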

\begin{flushright} $\qed$ \end{flushright}
\section{Formal analytic expansions: method 2}
In this section we begin to calculate formal analytic expansions of
$\ell_{N}(x+y)^{r}$ for $N \in \mathbb{Z}$, $r \in \mathbb{C}$
according to a second method.  Recall Remark \ref{rem:logexpsecret}
for the ``true meanings'' of these objects.  We shall begin with the
``small $N$'' cases treating in order $N=0,1,-1$ and $-2$ separately.
Then we shall deal with the general case.  The reader may skip ahead
to Section~\ref{sec:gencasemethod2} without any loss of generality.

\subsection{Case $N=0$, the case ``$x$''}
\label{sectioncaseN=0}
Here we first calculate $e^{y\frac{d}{dx}}\ell_{0}(x)$ and then using
the automorphism property, or alternatively the formal Taylor theorem,
we may find $e^{y\frac{d}{dx}}\ell_{0}(x)^{n}$ for $n \in \mathbb{N}$
by using a (nonnegative integral) binomial expansion.  We have
\begin{align}
\label{eq:aut1}
e^{y\frac{d}{dx}}\ell_{0}(x)^{n}=(\ell_{0}(x)+y)^{n}=\sum_{k \geq
0}\binom{n}{k}\ell_{0}(x)^{n-k}y^{k}.
\end{align}
We would like to extend this to the case where we have a general
complex exponent.  Of course, it is easy to do it directly, but we may
also use the result for nonnegative integral exponents to help us, a
method which is convenient in more difficult expansions.
\begin{prop}
\label{prop:method2case1}
For all $r \in \mathbb{C}$
\begin{align*}
e^{y\frac{d}{dx}}\ell_{0}(x)^{r}=(\ell_{0}(x)+y)^{r}=\sum_{m \geq
0}\binom{r}{m}\ell_{0}(x)^{r-m}y^{m}.
\end{align*}
\end{prop}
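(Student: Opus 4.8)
The statement to prove is that $e^{y\frac{d}{dx}}\ell_0(x)^r = (\ell_0(x)+y)^r = \sum_{m \geq 0}\binom{r}{m}\ell_0(x)^{r-m}y^m$ for arbitrary $r \in \mathbb{C}$. The plan is to leverage the case of nonnegative integer exponents already established in \eqref{eq:aut1}, together with the automorphism property (the proposition in Section~3) and the definition $\ell_0(x+y) = e^{y\frac{d}{dx}}\ell_0(x)$, rather than recomputing the action of $\left(\frac{d}{dx}\right)^m$ directly. The first identity $e^{y\frac{d}{dx}}\ell_0(x)^r = (\ell_0(x)+y)^r$ should be read as the \emph{definition} of the right-hand side: by the formal Taylor theorem, $e^{y\frac{d}{dx}}$ applied to any element of $\mathbb{C}\{[\ell]\}$ is the formal translation $p(x) \mapsto p(x+y)$, so $(\ell_0(x)+y)^r$ is precisely the notation for $e^{y\frac{d}{dx}}\ell_0(x)^r$. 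Thus the content is entirely in the \emph{second} equality, namely the binomial series expansion of this object.

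First I would observe that $e^{y\frac{d}{dx}}$ is an algebra automorphism by the automorphism property, so for any complex exponents $r, s$ we have
\begin{align*}
e^{y\frac{d}{dx}}\bigl(\ell_0(x)^{r}\ell_0(x)^{s}\bigr)
= \bigl(e^{y\frac{d}{dx}}\ell_0(x)^{r}\bigr)\bigl(e^{y\frac{d}{dx}}\ell_0(x)^{s}\bigr).
\end{align*}
This shows that the function $g(r) := e^{y\frac{d}{dx}}\ell_0(x)^r$ satisfies $g(r+s) = g(r)g(s)$, i.e. it is a one-parameter ``exponential'' in $r$ valued in $\mathbb{C}\{[\ell]\}[[y]]$. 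The key step is then to pin down $g(r)$ from its values on the nonnegative integers $n$, where \eqref{eq:aut1} already gives $g(n) = \sum_{k \geq 0}\binom{n}{k}\ell_0(x)^{n-k}y^k$. Since the base $\ell_0(x)+y$ is invertible in the appropriate completion (one may factor $\ell_0(x)^{r}$ out and expand $(1 + y/\ell_0(x))^r$ using the formal binomial/logarithmic series of Section~2), the multiplicativity $g(r+s)=g(r)g(s)$ together with agreement at $r=n \in \mathbb{N}$ forces $g(r) = \sum_{m \geq 0}\binom{r}{m}\ell_0(x)^{r-m}y^m$ for all complex $r$.

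Alternatively — and this is likely the cleanest route — one checks directly that the proposed right-hand side $\sum_{m \geq 0}\binom{r}{m}\ell_0(x)^{r-m}y^m$ equals the direct computation in the $N=0$ subsection of Section~5 (the earlier Proposition), using $\left(\frac{d}{dx}\right)^m\ell_0(x)^r = r(r-1)\cdots(r-m+1)\ell_0(x)^{r-m}$ and dividing by $m!$; this reproduces $\binom{r}{m}$ exactly. The main obstacle is not computational but definitional: one must justify that manipulating $(\ell_0(x)+y)^r$ as a formal binomial series is legitimate, i.e. that the coefficient of each $y^m$ is a finite sum so that the expansion lives in $\mathbb{C}\{[\ell]\}[[y]]$. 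This is immediate here because the $\binom{r}{m}$ coefficient of $y^m$ is a single monomial $\ell_0(x)^{r-m}$, so no convergence or finiteness issue arises, and the verification reduces to matching the two already-computed expressions term by term.
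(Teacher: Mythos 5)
Your primary route has a genuine gap at its key step. From the automorphism property you correctly deduce that $g(r):=e^{y\frac{d}{dx}}\ell_{0}(x)^{r}$ satisfies $g(r+s)=g(r)g(s)$, but multiplicativity together with agreement at the nonnegative integers does \emph{not} force the formula at non-integer $r$: the functional equation only relates values of $g$ within additive subgroups of $\mathbb{C}$, so for instance $g(1/2)$ is constrained only up to a square root of $g(1)$. More drastically, if $\chi:\mathbb{C}\to\mathbb{C}$ is a $\mathbb{Q}$-linear map fixing the integers (built from a Hamel basis), then
\begin{align*}
\tilde{g}(r)=\sum_{m \geq 0}\binom{\chi(r)}{m}\ell_{0}(x)^{r-m}y^{m}
\end{align*}
also satisfies $\tilde{g}(r+s)=\tilde{g}(r)\tilde{g}(s)$ (the Vandermonde convolution holds as a polynomial identity in $\chi(r),\chi(s)$, and $\chi$ is additive) and matches \eqref{eq:aut1} at every $n\in\mathbb{N}$, yet differs from the claimed series. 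The missing ingredient is a rigidity statement tying $g$ back to its definition, and this is exactly what the paper's proof supplies: each coefficient of $y^{m}$, written against the monomial $\ell_{0}(x)^{r-m}$, is a \emph{polynomial} in $r$ of bounded degree (each application of $\frac{d}{dx}$ multiplies by an exponent), and a polynomial agreeing with $\binom{r}{m}$ at all nonnegative integers must equal $\binom{r}{m}$. Your parenthetical suggestion of expanding $(1+y/\ell_{0}(x))^{r}$ via the formal series of Section~2 does not repair the gap, since identifying $e^{y\frac{d}{dx}}\ell_{0}(x)^{r}$ with $\ell_{0}(x)^{r}e^{r\log(1+y/\ell_{0}(x))}$ is precisely the kind of statement under proof.

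Your fallback route is correct but is simply the method-1 computation: $\left(\frac{d}{dx}\right)^{m}\ell_{0}(x)^{r}=r(r-1)\cdots(r-m+1)\ell_{0}(x)^{r-m}$, divided by $m!$, reproduces $\binom{r}{m}$, and this is exactly the proposition already recorded in the $N=0$ subsection of the first expansion method; the paper itself concedes that the direct computation is easy. The reason the paper proves this proposition instead by polynomial interpolation from the integer case \eqref{eq:aut1} is methodological: that interpolation argument is the template invoked verbatim (``arguing as in the proof of Proposition~\ref{prop:method2case1}'') in the cases $N=1,-1,-2$ and in the general theorems, where directly computing all iterated derivatives for complex exponents is no longer trivial. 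So your second sketch suffices for this statement in isolation, but your first argument as stated is invalid without the polynomiality-of-coefficients observation that constitutes the paper's actual proof.
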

\begin{proof}
  Notice that $e^{y\frac{d}{dx}}\ell_{0}(x)^{r} \in
  \mathbb{C}\ell_{0}(x)^{r}[\ell_{0}(x)^{-1}][[y]]$ with coefficients
  being polynomials in $r$.  When $r$ is a nonnegative integer we have
  by (\ref{eq:aut1}) that the polynomial is
\begin{align*}
\binom{r}{m}=\frac{r(r-1)\cdots(r-m+1)}{m!},
\end{align*}
and since polynomials are determined by a finite number of values we
get the result.
\end{proof}

\subsection{Case $N=1$, the case ``$\log x $''}

We also have $\left(\frac{d}{dx}\right)^{l}
\ell_{1}(x)=(-1)^{l-1}(l-1)!\ell_{0}(x)^{-l}$ for
$l \geq 1$.  
%It is convenient as usual to 
%\begin{align*}
%\log (1+X)=\sum_{l \geq 1}\frac{(-1)^{l-1}}{l}X^{l}
%\end{align*}
%for any formal expression $X$ for which the expansion has finitely
%many terms contributing to the coefficient of each monomial.  We note
%that we now have two different types of formal logarithm, just as in
%\cite{HLZ}.
Recalling (\ref{eq:formallog}), we get that for $n \in \mathbb{N}$:
\begin{align}
\label{eq:method4 log}
e^{y\frac{d}{dx}}\ell_{1}(x)^{n}
&=\left(e^{y\frac{d}{dx}}\ell_{1}(x)\right)^{n} \nonumber \\
&=\left(\ell_{1}(x)
+\log \left(1+\frac{y}{\ell_{0}(x)}\right)\right)^{n} \nonumber\\
&=\sum_{j \geq 0}\binom{n}{j}\ell_{1}(x)^{n-j}
\left(\sum_{l \geq 1}\frac{(-1)^{l-1}}{l}
\left(\frac{y}{\ell_{0}(x)}\right)^{l}\right)^{j}\nonumber\\
&=\sum_{j \geq 0}\binom{n}{j}\ell_{1}(x)^{n-j}
\sum_{m \geq j}(-1)^{m-j}
\underset{m_{i} \geq 1}{\sum_{m_{1}+\cdots+m_{j}=m}}
\frac{1}{m_{1}\cdots m_{j}}
\left(\frac{y}{\ell_{0}(x)}\right)^{m}.
\end{align}
Then by arguing as in the proof of Proposition~\ref{prop:method2case1}
we immediately get the following.
\begin{prop}
For all $r \in \mathbb{C}$
\begin{align*}
e^{y\frac{d}{dx}}\ell_{1}(x)^{r}
=
\sum_{j \geq 0}\binom{r}{j}\ell_{1}(x)^{r-j}
\sum_{m \geq j}(-1)^{m-j}
\underset{m_{i} \geq 1}{\sum_{m_{1}+\cdots+m_{j}=m}}
\frac{1}{m_{1}\cdots m_{j}}
\left(\frac{y}{\ell_{0}(x)}\right)^{m}.
\end{align*}
\end{prop}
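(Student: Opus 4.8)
The plan is to mimic the polynomiality (interpolation) argument used in the proof of Proposition~\ref{prop:method2case1}. The identity (\ref{eq:method4 log}) already establishes the desired formula whenever the exponent is a nonnegative integer $n$; the task is to upgrade this to arbitrary complex $r$ by interpolating in the exponent.

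First I would observe that $e^{y\frac{d}{dx}}\ell_{1}(x)^{r}$ lies in $\ell_{1}(x)^{r}\,\mathbb{C}[\ell_{1}(x)^{-1},\ell_{0}(x)^{-1}][[y]]$ with coefficients polynomial in $r$. Indeed, the coefficient of $y^{m}$ is $\frac{1}{m!}\left(\frac{d}{dx}\right)^{m}\ell_{1}(x)^{r}$, and since $\frac{d}{dx}\left(\ell_{1}(x)^{s}\ell_{0}(x)^{t}\right)=s\ell_{1}(x)^{s-1}\ell_{0}(x)^{t-1}+t\ell_{1}(x)^{s}\ell_{0}(x)^{t-1}$, each application of $\frac{d}{dx}$ lowers the $\ell_{0}$-exponent by one and brings down a factor linear in the current exponents. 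Starting from $\ell_{1}(x)^{r}\ell_{0}(x)^{0}$ and iterating $m$ times therefore yields a finite sum of terms $\ell_{1}(x)^{r-j}\ell_{0}(x)^{-m}$ with $0\leq j\leq m$, in which the coefficient of each such monomial is a polynomial in $r$ of degree $j$. After factoring out $\ell_{1}(x)^{r}$, the coefficient of each $\ell_{1}(x)^{-j}\ell_{0}(x)^{-m}y^{m}$ thus becomes a well-defined polynomial $Q_{j,m}(r)$ in the $r$-independent space $\mathbb{C}[\ell_{1}(x)^{-1},\ell_{0}(x)^{-1}][[y]]$.

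Next I would note that, after the same factoring of $\ell_{1}(x)^{r}$, the right-hand side is the element of $\mathbb{C}[\ell_{1}(x)^{-1},\ell_{0}(x)^{-1}][[y]]$ whose coefficient of $\ell_{1}(x)^{-j}\ell_{0}(x)^{-m}y^{m}$ is
\begin{align*}
\binom{r}{j}(-1)^{m-j}\underset{m_{i}\geq 1}{\sum_{m_{1}+\cdots+m_{j}=m}}\frac{1}{m_{1}\cdots m_{j}},
\end{align*}
which is manifestly polynomial in $r$, its entire $r$-dependence residing in the factor $\binom{r}{j}$. By (\ref{eq:method4 log}) the two sides agree whenever $r=n$ is a nonnegative integer, so for each fixed pair $(j,m)$ the polynomial $Q_{j,m}(r)$ and the displayed polynomial coincide at every nonnegative integer.

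Finally, since two polynomials in one variable that agree at infinitely many values are identical, the coefficients match for all $r\in\mathbb{C}$, and comparing coefficients of the common basis $\ell_{1}(x)^{-j}\ell_{0}(x)^{-m}y^{m}$ yields the claimed identity. The only point demanding care—and the main step—is the first one: verifying that the normalized left-hand coefficients are genuinely polynomial in $r$, which rests on the explicit linear-in-the-exponent action of $\frac{d}{dx}$. Everything after that is the standard interpolation argument already carried out in Proposition~\ref{prop:method2case1}.
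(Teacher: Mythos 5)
Your proposal is correct and follows essentially the same route as the paper, which also establishes the nonnegative-integer case via (\ref{eq:method4 log}) and then invokes the polynomial-interpolation argument of Proposition~\ref{prop:method2case1}; your only addition is to spell out explicitly why the coefficients, after factoring out $\ell_{1}(x)^{r}$, are polynomial in $r$, which the paper leaves implicit.
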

\begin{flushright} $\qed$ \end{flushright}

\subsection{Case $N=-1$, the case ``${\rm exp}\;x$''}
We have for $n \in \mathbb{N}$
\begin{align*}
e^{y\frac{d}{dx}}\ell_{-1}(x)^{n}&=
\left(e^{y\frac{d}{dx}}\ell_{-1}(x)\right)^{n}\\
&=\left(\ell_{-1}(x)e^{y}\right)^{n}\\
&=\ell_{-1}(x)^{n}(1+(e^{y}-1))^{n}\\
%&=\left(\ell_{-1}(x)+\ell_{-1}(x)(e^{y}-1)\right)^{n}\\
%&=\sum_{m \geq 0}\binom{n}{m}
%\ell_{-1}(x)^{n-m}\ell_{-1}(x)^{m}(e^{y}-1)^{m}\\
&=\ell_{-1}(x)^{n}\sum_{m \geq 0}\binom{n}{m}(e^{y}-1)^{m}.\\
\end{align*}
Then by arguing as in the proof of Proposition~\ref{prop:method2case1}
we immediately get the following.
\begin{prop}
For all $r \in \mathbb{C}$
\begin{align*}
e^{y\frac{d}{dx}}\ell_{-1}(x)^{r}
&=\ell_{-1}(x)^{r}\sum_{m \geq 0}\binom{r}{m}(e^{y}-1)^{m}.
\end{align*}
%where $\left(e^{y}\right)^{r}$ is
%interpreted as $\left(1+\left(e^{y}-1\right)\right)^{r}$ expanded
%using the binomial expansion convention.  
\end{prop}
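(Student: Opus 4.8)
The plan is to follow verbatim the interpolation argument of Proposition~\ref{prop:method2case1}, now carried out in the ``exponential'' variable $\ell_{-1}(x)$. The nonnegative-integer case has already been settled in the display immediately preceding the statement: the automorphism property lets us factor $e^{y\frac{d}{dx}}\ell_{-1}(x)^{n} = \left(e^{y\frac{d}{dx}}\ell_{-1}(x)\right)^{n} = \left(\ell_{-1}(x)e^{y}\right)^{n}$, after which an ordinary (nonnegative-integral) binomial expansion of $(1+(e^{y}-1))^{n}$ produces exactly $\ell_{-1}(x)^{n}\sum_{m \geq 0}\binom{n}{m}(e^{y}-1)^{m}$. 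It therefore remains only to promote this identity from $n \in \mathbb{N}$ to arbitrary $r \in \mathbb{C}$.

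First I would place both sides in a common space and track their $y$-expansions. Since $\frac{d}{dx}\ell_{-1}(x)^{r} = r\,\ell_{-1}(x)^{r}$, we have $e^{y\frac{d}{dx}}\ell_{-1}(x)^{r} \in \mathbb{C}\ell_{-1}(x)^{r}[[y]]$, and the coefficient of each $y^{k}$ is $\ell_{-1}(x)^{r}$ times $r^{k}/k!$, a polynomial in $r$. For the right-hand side I would expand $(e^{y}-1)^{m}$ as a power series in $y$ whose lowest-order term is $y^{m}$; this is the crucial finiteness observation, as it guarantees that for each fixed power $y^{k}$ only the terms with $m \leq k$ contribute. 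Hence the right-hand side is a well-defined element of $\mathbb{C}\ell_{-1}(x)^{r}[[y]]$, and since each $\binom{r}{m}$ is a polynomial in $r$, the coefficient of every $y^{k}$ is again $\ell_{-1}(x)^{r}$ times a polynomial in $r$ (of degree at most $k$).

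With both sides written as $\ell_{-1}(x)^{r}$ times a power series in $y$ whose $y^{k}$-coefficient is a polynomial in $r$, the interpolation step closes the argument: by the nonnegative-integer case these two families of polynomials agree at every $r = n \in \mathbb{N}$, and a polynomial is determined by its values at infinitely many points, so the coefficients coincide for all $r \in \mathbb{C}$, power of $y$ by power of $y$. This yields the stated identity.

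The main obstacle—though a mild one—is the well-definedness and polynomiality bookkeeping on the right-hand side: one must confirm that the expansion of $(e^{y}-1)^{m}$ genuinely begins at order $y^{m}$, so that the $y$-expansion of the sum over $m$ is termwise finite and each $y^{k}$-coefficient is an honest polynomial in $r$ of bounded degree. Once this is in hand, the polynomial-interpolation principle applies exactly as in Proposition~\ref{prop:method2case1}, and no convergence argument or binomial-series-for-complex-exponent input is needed—indeed this proposition is precisely the point at which that classical series is being recovered rather than assumed.
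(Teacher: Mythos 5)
Your proposal is correct and matches the paper's own argument: the paper likewise establishes the case $n \in \mathbb{N}$ by factoring $e^{y\frac{d}{dx}}\ell_{-1}(x)^{n}=\left(\ell_{-1}(x)e^{y}\right)^{n}$ via the automorphism property and expanding $(1+(e^{y}-1))^{n}$ binomially, then promotes to all $r \in \mathbb{C}$ ``by arguing as in the proof of Proposition~\ref{prop:method2case1},'' i.e.\ exactly your polynomial-interpolation step. Your extra bookkeeping—that $(e^{y}-1)^{m}$ starts at order $y^{m}$, so each $y^{k}$-coefficient is a finite sum and an honest polynomial in $r$—is just the detail the paper leaves implicit in that citation.
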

\begin{flushright} $\qed$ \end{flushright}

\subsection{Case $N=-2$, the case ``${\rm exp}\;{\rm exp}\;x$''}
\label{sectioncaseN=-2}
In order to proceed as in the previous three examples we would like to
be able to easily calculate $e^{y\frac{d}{dx}}\ell_{-2}(x)$ and then
take the $m$-th power of the result.  However, it is just as difficult
to calculate $e^{y\frac{d}{dx}}\ell_{-2}(x)$ as
$e^{y\frac{d}{dx}}\ell_{-2}(x)^{m}$, since the answer already involves
two variables non-trivially.  We shall therefore use a different
strategy which we could have used in place of method 4 in the case
$N=-1$ and also in a sense cases $N=0,1$ although these cases are
roughly like initial cases.  We shall use a recursion formula which we
state here.
\begin{theorem} For $n \in \mathbb{Z}$ we have
\label{th:itlogrec}
\begin{align*}
\ell_{n+1}(x+y)&=\ell_{n+1}(x)+\log
\left(1+\left(\frac{\ell_{n}(x+y)-\ell_{n}(x)}
{\ell_{n}(x)}\right)\right)\\
 &\text{and} \\
\ell_{n}(x+y)&=\ell_{n}(x)e^{\left(\ell_{n+1}(x+y)-\ell_{n+1}(x)\right)}.
\end{align*}
\end{theorem}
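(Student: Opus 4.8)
The recursion in Theorem~\ref{th:itlogrec} consists of two formulas, and I expect them to be logically equivalent, so the plan is to prove the first and then extract the second by algebraic inversion. The first formula should follow by applying the definition $\ell_{n}(x+y)=e^{y\frac{d}{dx}}\ell_{n}(x)$ together with the automorphism property. The key structural fact I would exploit is the way the derivation $\frac{d}{dx}$ interlocks successive iterated logarithms: from the defining relations in Section~\ref{sec:varch}, one has $\frac{d}{dx}\ell_{n+1}(x)=\ell_{n}(x)^{-1}\prod_{j=0}^{n-1}\ell_{j}(x)^{-1}$ for $n\geq 0$ (and the analogous relation for $n<0$), which says precisely that $\frac{d}{dx}\ell_{n+1}(x)$ is the reciprocal of the ``derivative chain'' feeding into $\ell_{n}(x)$. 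Heuristically $\ell_{n+1}=\log\ell_{n}$, so the recursion $\ell_{n+1}(x+y)-\ell_{n+1}(x)=\log(\ell_{n}(x+y)/\ell_{n}(x))$ is exactly the statement that taking logs turns the formal translation of $\ell_n$ into an additive shift of $\ell_{n+1}$.

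\textbf{Steps, in order.} First I would fix the formal setting: the quantity $\ell_{n}(x+y)-\ell_{n}(x)$ lies in a space where it is divisible by $y$, so $(\ell_{n}(x+y)-\ell_{n}(x))/\ell_{n}(x)$ has positive $y$-order, and hence $\log\bigl(1+(\ell_{n}(x+y)-\ell_{n}(x))/\ell_{n}(x)\bigr)$ is a well-defined element via the series \eqref{eq:formallog} with each coefficient finitely computable. This well-definedness check is essential and should be stated explicitly. Second, I would apply $e^{y\frac{d}{dx}}$ to the formal identity $\ell_{n+1}(x)=\log\ell_{n}(x)$ interpreted appropriately; but since $\log\ell_n(x)$ is not literally an element of $\mathbb{C}\{[\ell]\}$, the cleaner route is to verify the claimed identity by showing both sides satisfy the same first-order formal differential equation in $y$ with the same initial value at $y=0$. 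Concretely, I would differentiate both sides of the proposed first formula with respect to $y$ (equivalently apply $\frac{d}{dx}$ and use $\frac{d}{dy}e^{y\frac{d}{dx}}=e^{y\frac{d}{dx}}\frac{d}{dx}$), and check that the derivation relations make the two $y$-derivatives coincide, while at $y=0$ both sides equal $\ell_{n+1}(x)$.

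\textbf{Deriving the second formula.} Once the first formula is established, I would exponentiate. Writing $u=\ell_{n+1}(x+y)-\ell_{n+1}(x)$, the first formula reads $e^{u}=1+(\ell_{n}(x+y)-\ell_{n}(x))/\ell_{n}(x)=\ell_{n}(x+y)/\ell_{n}(x)$, whence $\ell_{n}(x+y)=\ell_{n}(x)e^{u}=\ell_{n}(x)e^{(\ell_{n+1}(x+y)-\ell_{n+1}(x))}$, which is exactly the second formula. Here I must confirm that $\exp$ and $\log$ are mutually inverse on the relevant formal objects (both $u$ and the argument of the log have positive $y$-order), so the composition $e^{\log(1+\,\cdot\,)}=1+\,\cdot\,$ holds at the level of these formal series.

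\textbf{Main obstacle.} The chief difficulty is not the formal manipulation but making the differential-equation (or fixed-point) argument rigorous across the sign change at $n=0$: the derivation acts by the three-case formula of Section~\ref{sec:varch}, and the cases $n>0$, $n=0$, $n<0$ have genuinely different-looking derivative chains. I would therefore expect the bulk of the real work to be a careful verification that $\frac{d}{dx}\ell_{n+1}(x)=1/\bigl(\frac{d}{dx}\bigr)^{-1}$-type relations hold uniformly, i.e.\ that $\frac{d}{dx}$ composed through the chain of defining relations yields $\frac{d}{dx}\ell_{n+1}(x)=(\ell_n(x))^{-1}\cdot(\text{chain already built into }\frac{d}{dx}\ell_n(x))^{-1}$ for every $n\in\mathbb{Z}$; this uniformity is what lets a single argument cover all $n$. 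The full proof is deferred to \cite{R2}, as noted in the introduction, so here I would present the equivalence of the two formulas and the well-definedness checks, and indicate the differential-equation verification as the technical core.
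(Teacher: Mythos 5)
You should first be aware that the paper contains no internal proof of Theorem \ref{th:itlogrec} to compare against: it is the single imported result, stated with a closing box and the remark that a proof is given in \cite{R2}, exactly as announced in the introduction. Your sketch, however, is sound, and written out it would make the paper self-contained at this point. The pivot of your argument is correct: the uniform relation $\frac{d}{dx}\ell_{n+1}(x)=\ell_{n}(x)^{-1}\,\frac{d}{dx}\ell_{n}(x)$ holds for every $n\in\mathbb{Z}$ by the three-case check (for $n\geq 0$ both sides equal $\prod_{i=0}^{n}\ell_{i}(x)^{-1}$; for $n=-m<0$ both sides equal $\prod_{i=-1}^{-m+1}\ell_{i}(x)$, interpreted as $1$ when $m=1$); your displayed version of this relation, the ``$\frac{d}{dx}\ell_{n+1}(x)=1/(\frac{d}{dx})^{-1}$-type'' formula, is garbled as written, but the intended statement is the one just given. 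Granting it, the automorphism property yields $e^{y\frac{d}{dx}}\ell_{n}(x)^{-1}=\ell_{n}(x+y)^{-1}$ (apply $e^{y\frac{d}{dx}}$ to $\ell_{n}(x)^{-1}\ell_{n}(x)=1$, noting $\ell_{n}(x+y)=\ell_{n}(x)(1+u)$ with $u$ of positive $y$-order, hence invertible in $\mathbb{C}\{[\ell]\}[[y]]$), so that $\frac{d}{dy}\ell_{n+1}(x+y)=e^{y\frac{d}{dx}}\frac{d}{dx}\ell_{n+1}(x)=\ell_{n}(x+y)^{-1}\frac{d}{dy}\ell_{n}(x+y)$, while the formal chain rule applied to $\log(1+u)$ gives exactly the same expression for the $y$-derivative of the right-hand side of the first identity; since both sides reduce to $\ell_{n+1}(x)$ at $y=0$, equality follows coefficient by coefficient in $y$. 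Note this is even simpler than you suggest: the common derivative is an explicit series not involving the unknown, so no uniqueness theorem for formal differential equations is needed, and the ``sign change at $n=0$'' you flag as the main obstacle amounts only to the routine case check above. Your passage to the second identity by exponentiation is likewise fine, since $e^{\log(1+X)}=1+X$ for $X$ of positive $y$-order, and your well-definedness observations (positive $y$-order of $u$, finite computability of each coefficient) are exactly the right hygiene. What your route buys over the paper's is independence from \cite{R2}; what the paper's citation buys is brevity, the author having deliberately kept this recursion as the sole external ingredient.
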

\begin{flushright} $\qed$ \end{flushright} 

A proof of this is given in \cite{R2}.
\section{Formal analytic expansions: general case $N \in \mathbb{Z}$
  ``iterated logarithms and exponentials''}
\label{sec:gencasemethod2}
In this section we complete the calculation, using the second method,
of the formal analytic expansion of $\ell_{N}(x+y)^{r}$ for all $N \in
\mathbb{Z}$.  
\subsection{Iterated logarithms: second method}
Letting $n \in \mathbb{N}$ and $N \geq 0$, we can use
Theorem~\ref{th:itlogrec} to get:
\begin{align*}
\ell_{N}(x+y)^{n}&=\left(\ell_{N}(x)+\log 
\left(1+\left(\frac{\ell_{N-1}(x+y)-\ell_{N-1}(x)}
{\ell_{N-1}(x)}\right)\right)
\right)^{n}\\
=&\sum_{s \geq 0}\binom{n}{s}\ell_{N}(x)^{n-s} \cdot \\
&\,\,
\cdot
\left(
\log 
\left(
1+\ell_{N-1}(x)^{-1} \log 
\left(1+
\left(
\frac{\ell_{N-2}(x+y)-\ell_{N-2}(x)}
{\ell_{N-2}(x)}
\right)
\right)
\right)
\right)^{s}.
%&=\left(\ell_{N}(x)+
%\sum_{m \geq 1}\frac{(-1)^{m-1}}{m}
%\left(\frac{\ell_{N-1}(x+y)-\ell_{N-1}(x)}{\ell_{N-1}(x)}\right)^{m}
%\right)^{n}\\
%&=\sum_{s \geq 0}\binom{n}{s}\ell_{N}(x)^{n-s}
%\left(\sum_{m \geq 1}\frac{(-1)^{m-1}}{m}
%\left(\frac{\ell_{N-1}(x+y)-\ell_{N-1}(x)}{\ell_{N-1}(x)}\right)^{m}
%\right)^{s}\\
%&=\sum_{s \geq 0}\binom{n}{s}\ell_{N}(x)^{n-s}
%\sum_{k \geq 0}
%\frac{s!}{k!}
%{k \atopwithdelims[] s}
%\genfrac{[}{]}{1.5pt}{}{k}{s}
%(-1)^{k-s}
%\left(\frac{\ell_{N-1}(x+y)-\ell_{N-1}(x)}{\ell_{N-1}(x)}\right)^{k}\\
%&=\sum_{k,s \geq 0} \binom{n}{s}\ell_{N}(x)^{n-s}
%\frac{s!}{k!}
%{ k \atopwithdelims[] s}
%\genfrac{[}{]}{1.5pt}{}{k}{s}
%(-1)^{k-s}
%\ell_{N-1}(x)^{-k} \cdot \\
%& \quad \cdot \sum_{l \geq 0} 
%\binom{k}{l}\ell_{N-1}(x+y)^{l}(-\ell_{N-1}(x))^{k-l}\\
%&=\sum_{k,s,l,\geq 0}\binom{n}{s}
%\frac{s!}{k!}
%{k \atopwithdelims[] s}
%\genfrac{[}{]}{1.5pt}{}{k}{s}
%\binom{k}{l}
%(-1)^{s+l}\ell_{N}(x)^{n-s}\ell_{N-1}(x)^{-l}\ell_{N-1}(x+y)^{l}.
\end{align*}
\begin{remark} \rm
Compare with the calculation leading to (3.16) in \cite{HLZ}.  
\end{remark}
Iterating this and recalling (\ref{stirl1gen}), we get
\begin{align*}
\ell_{N}(x+y)^{n}&=
\sum_{0 \leq s_{N}}
\binom{n}{s_{N}}\ell_{N}(x)^{n-s_{N}}
\sum_{s_{N} \leq s_{N-1}}(-1)^{s_{N-1}-s_{N}}
\frac{s_{N}!}{s_{N-1}!}
\St{s_{N-1}}{s_{N}}
%{ s_{N-1} \atopwithdelims[] s_{N} }
\ell_{N-1}(x)^{-s_{N-1}} \cdot\\
&\cdot \sum_{s_{N-1} \leq s_{N-2}}(-1)^{s_{N-2}-s_{N-1}}
\frac{s_{N-1}!}{s_{N-2}!}
\St{s_{N-2}}{s_{N-1}}
%{ s_{N-2} \atopwithdelims[] s_{N-1} }
\ell_{N-2}(x)^{-s_{N-2}}
\cdots\\
&
\cdots
\sum_{s_{1} \leq s_{0}}(-1)^{s_{0}-s_{1}}
\frac{s_{1}!}{s_{0}!}
\St{s_{0}}{s_{1}}
%{ s_{0} \atopwithdelims[] s_{1} }
\ell_{0}(x)^{-s_{0}}
y^{s_{0}}\\
&=\ell_{N}(x)^{n}
\sum_{0 \leq s_{N} \leq s_{N-1} \leq \cdots \leq s_{1} \leq s_{0}}
\binom{n}{s_{N}}
(-1)^{s_{0}-s_{N}}
\frac{s_{N}!}{s_{0}!}
\prod_{i=1}^{N}
\St{s_{i-1}}{s_{i}}
%{ s_{i-1} \atopwithdelims[] s_{i}}
\prod_{i=0}^{N}
\ell_{i}(x)^{-s_{i}}
y_{0}^{s_{0}}.
\end{align*}

Then by arguing as in the proof of Proposition~\ref{prop:method2case1}
we immediately get the following.
\begin{theorem} For all $r \in \mathbb{C}$, $N \geq 0$
\label{itlogexpsecond}
\begin{align*}
\ell_{N}(x+y)^{r}
&=
\ell_{N}(x)^{r}
\sum_{0 \leq s_{N} <\cdots < s_{1} < s_{0} }
\,
\binom{r}{s_{N}}
\frac{s_{N}!}{s_{0}!}
\prod_{i=0}^{N-1}
\St{s_{i}}{s_{i+1}}
%{s_{i} \atopwithdelims[] s_{i+1}} 
%\genfrac{[}{]}{1.5pt}{}{s_{i}}{s_{i+1}}  
(-1)^{s_{0}-s_{N}}
\prod_{i=0}^{N}\ell_{i}(x)^{-s_{i}}
y^{s_{0}}.
\end{align*}
%\begin{align*}
%\ell_{N}(x+y)^{r}&=
%\sum_{s_{0}, \cdots, s_{N} \geq 0}
%\prod_{i=0}^{N-1}
%\genfrac{[}{]}{1.5pt}{}{s_{i}}{s_{i+1}}
%(-1)^{s_{0}+s_{N}}
%\frac{s_{N}!}{s_{0}!}
%\binom{r}{s_{N}}
%\ell_{N}(x)^{r}
%\prod_{i=0}^{N}\ell_{i}(x)^{-s_{i}}y^{s_{0}}.
%\end{align*}
\end{theorem}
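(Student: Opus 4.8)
The plan is to reduce everything to the identity for nonnegative integer exponents that has just been established and then to promote it to arbitrary complex $r$ by the same polynomial-interpolation device used in the proof of Proposition~\ref{prop:method2case1}. Concretely, I would first observe that $\ell_{N}(x+y)^{r}=e^{y\frac{d}{dx}}\ell_{N}(x)^{r}$ lies in $\ell_{N}(x)^{r}\,\mathbb{C}[\ell_{0}(x)^{-1},\dots,\ell_{N}(x)^{-1}][[y]]$, and that the coefficient of each monomial $y^{s_{0}}\prod_{i=0}^{N}\ell_{i}(x)^{-s_{i}}$ (after the factor $\ell_{N}(x)^{r}$ is pulled out) is a polynomial in $r$. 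This is visible from the fact that $\left(\frac{d}{dx}\right)^{m}\ell_{N}(x)^{r}$ is a finite combination of monomials $\ell_{N}(x)^{r}\prod_{i=0}^{N}\ell_{i}(x)^{-\alpha_{i}}$ whose coefficients are polynomial in $r$ (this is exactly the content of the specialization carried out in method~1, cf.\ Theorem~\ref{itlogexpfrst}, where the only $r$-dependence enters through $\binom{r}{j_{N}}$); summing $e^{y\frac{d}{dx}}=\sum_{m\geq 0}\frac{y^{m}}{m!}\left(\frac{d}{dx}\right)^{m}$ then produces a well-defined power series in $y$ each of whose coefficients is a polynomial in $r$.

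Next I would record two bookkeeping facts that make the interpolation legitimate coefficient-by-coefficient. First, because $y$ and the $\ell_{i}(x)$ are algebraically independent, distinct index tuples $(s_{0},\dots,s_{N})$ produce distinct monomials $y^{s_{0}}\prod_{i=0}^{N}\ell_{i}(x)^{-s_{i}}$, so one may compare the two sides term by term. Second, for a fixed power $y^{s_{0}}$ only finitely many tuples contribute (the index $s_{0}$ fixes the power of $y$ and bounds all the remaining $s_{i}$), so each coefficient is a genuine, well-defined polynomial in $r$ rather than an infinite sum. The claimed coefficient on the right-hand side, namely $\binom{r}{s_{N}}\frac{s_{N}!}{s_{0}!}(-1)^{s_{0}-s_{N}}\prod_{i=0}^{N-1}\St{s_{i}}{s_{i+1}}$, is visibly a polynomial in $r$, since the Stirling factors and the remaining constants do not involve $r$ and the sole $r$-dependence sits in $\binom{r}{s_{N}}$.

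With this in hand the argument concludes exactly as in Proposition~\ref{prop:method2case1}: the integer-exponent identity derived just above shows that, for every $r=n\in\mathbb{N}$, the coefficient extracted from $\ell_{N}(x+y)^{n}$ agrees with the claimed polynomial evaluated at $n$; since two polynomials in $r$ that agree at all nonnegative integers coincide, equality of the coefficients holds for all $r\in\mathbb{C}$, whence the displayed formula holds identically. The step I expect to require the most care is precisely the reconciliation of the \emph{finite} sum appearing for integral $n$ (where $\binom{n}{s_{N}}$ truncates the range of $s_{N}$) with the \emph{infinite} sum asserted for complex $r$: one must be sure the interpolation is performed on the coefficient of each \emph{individual} monomial, where the vanishing of $\binom{n}{s_{N}}$ for $s_{N}>n$ is exactly the specialization of the universal factor $\binom{r}{s_{N}}$, so that no term is lost or spuriously created in passing from $n$ to $r$.
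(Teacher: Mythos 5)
Your proposal is correct and takes essentially the same route as the paper: the paper derives the nonnegative-integer-exponent expansion by iterating Theorem \ref{th:itlogrec} together with (\ref{stirl1gen}) in the display immediately preceding the theorem, and then extends to all $r \in \mathbb{C}$ precisely by the polynomial-interpolation device of Proposition \ref{prop:method2case1}, which is exactly your plan. Your extra bookkeeping (polynomiality of each coefficient in $r$, distinctness of the monomials, and finiteness of the set of tuples contributing to a fixed power of $y$, with $\binom{n}{s_{N}}$ vanishing for $s_{N}>n$ as the specialization of the universal factor $\binom{r}{s_{N}}$) simply makes explicit what the paper compresses into ``arguing as in the proof of Proposition \ref{prop:method2case1}.''
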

\begin{flushright} $\square$ \end{flushright}

\subsection{Iterated exponentials: second method}
We use (\ref{th:itlogrec}) as in the last section, but this time the
second form so that we may iterate in the other direction.  We get for
$N \leq 0$ and $n \in \mathbb{N}$
\begin{align*}
\ell_{N}(x+y)^{n}
&=\left(\ell_{N}(x)e^{\left(\ell_{N+1}(x+y)-\ell_{N+1}(x)\right)}\right)^{n}\\
&=\ell_{N}(x)^{n}e^{n\left(\ell_{N+1}(x+y)-\ell_{N+1}(x)\right)}\\
&=\ell_{N}(x)^{n}
e^{-n\ell_{N+1}(x)}
e^{n\ell_{N+1}(x+y)}\\
&=\ell_{N}(x)^{n}
e^{-n\ell_{N+1}(x)}
e^{n\ell_{N+1}(x)e^{\left(\ell_{N+2}(x+y)-\ell_{N+2}(x)\right)}}\\
&=\ell_{N}(x)^{n}
e^{n\ell_{N+1}(x)\left(e^{\left(\ell_{N+2}(x+y)-\ell_{N+2}(x)\right)}-1\right)}\\
&=\ell_{N}(x)^{n}
e^{n\ell_{N+1}(x)
\left(e^{\ell_{N+2}(x)
\left(e^{\left(\ell_{N+3}(x+y)-\ell_{N+3}(x)\right)}-1\right)}-1\right)},
\end{align*}
at which point the iteration is clear (the factor with $n$ plays no
role after the first iteration).

It is convenient to write $N$ as a nonnegative number, so now,
recalling (\ref{stirl2gen}), we let $N \geq 0$ and $n \in \mathbb{N}$
to get
\begin{align*}
\ell_{-N}(x+y)^{n}
=&\ell_{-N}(x)^{n}
\sum_{0 \leq l_{1}}\frac{(n\ell_{-N+1}(x))^{l_{1}}}{l_{1}!} \cdot\\
& 
\cdot
\sum_{l_{1} \leq l_{2}}\frac{\ell_{-N+2}(x)^{l_{2}}l_{1}!}{l_{2}!}
\st{l_{2}}{l_{1}} 
\sum_{l_{2} \leq l_{3}}\frac{\ell_{-N+3}(x)^{l_{3}}l_{2}!}{l_{3}!}
\st{l_{3}}{l_{2}} 
\cdots\\
&\sum_{l_{N-2} \leq l_{N-1}}
\frac{\ell_{-1}(x)^{l_{N-1}}l_{N-2}!}{l_{N-1}!}
\st{l_{N-1}}{l_{N-2}} 
\sum_{l_{N-1} \leq l_{N}}
\frac{y^{l_{N}}l_{N-1}!}{l_{N}!}
\st{l_{N}}{l_{N-1}}\\
=&
\ell_{-N}(x)^{n}
\sum_{0 \leq l_{1} \leq l_{2} \leq \cdots \leq l_{N}}
n^{l_{1}}
\st{l_{2}}{l_{1}} 
\st{l_{3}}{l_{2}} 
\cdots 
\st{l_{N-1}}{l_{N-2}}
\st{l_{N}}{l_{N-1}} 
\cdot\\
& \cdot
\ell_{-N+1}(x)^{l_{1}}\ell_{-N+2}(x)^{l_{2}} \cdots \ell_{-1}(x)^{l_{N-1}}
\frac{y^{l_{N}}}{l_{N}!}.
\end{align*}
Then by arguing as in the proof of Proposition~\ref{prop:method2case1}
we immediately get the following.
\begin{theorem}
\label{itexpsecondmeth}
Let $N \geq 0$ and $r \in \mathbb{C}$ we get
\begin{align*}
\ell_{-N}(x+y)^{r}
=&
\ell_{-N}(x)^{r}
\sum_{0 \leq l_{1} \leq l_{2} \leq \cdots \leq l_{N}}
r^{l_{1}}
\prod_{i=1}^{N-1} 
\st{l_{i+1}}{l_{i}} 
\prod_{i=1}^{N-1}\ell_{-i}(x)^{l_{N-i}}
\frac{y^{l_{N}}}{l_{N}!}.
\end{align*}
\end{theorem}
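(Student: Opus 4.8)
The plan is to establish Theorem~\ref{itexpsecondmeth} by first proving the nonnegative-integer case $r = n \in \mathbb{N}$ and then upgrading to arbitrary $r \in \mathbb{C}$ by the polynomiality argument already used in Proposition~\ref{prop:method2case1}. The displayed computation immediately preceding the theorem statement does exactly the hard part for the integer case: starting from the second form of the recursion in Theorem~\ref{th:itlogrec}, namely $\ell_{N}(x+y) = \ell_{N}(x) e^{(\ell_{N+1}(x+y) - \ell_{N+1}(x))}$, one raises to the $n$-th power, pulls out the constant factor $e^{-n\ell_{N+1}(x)}$, and then repeatedly substitutes the recursion into $\ell_{N+1}(x+y)$, $\ell_{N+2}(x+y)$, and so on. After $N$ iterations one reaches a nested exponential expression in which $\ell_{0}(x+y)$ (equivalently a bare $y$, since $\ell_{0}(x+y) = x+y$) sits at the innermost layer, and the factor carrying $n$ survives only at the outermost layer. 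So the first step is simply to record that the preceding display \emph{is} the proof for integral $n$.

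Second, I would expand each nested exponential using the formal identity (\ref{stirl2gen}), $(e^{x}-1)^{n} = \sum_{m \geq n} \frac{n!}{m!} \st{m}{n} x^{m}$, working from the inside out. Each layer $e^{\ell_{-N+i}(x)(\,\cdots\,-1)}$ contributes one summation index $l_{i}$ and one Stirling number of the second kind $\st{l_{i+1}}{l_{i}}$, together with the factorial ratio $\frac{l_{i}!}{l_{i+1}!}$ and the power $\ell_{-N+i}(x)^{l_{i}}$; the outermost layer produces $n^{l_{1}}$ rather than a Stirling number because of the surviving $n$. Collecting the telescoping factorial ratios $\frac{n!}{l_{1}!}\cdot\frac{l_{1}!}{l_{2}!}\cdots$ leaves only $\frac{1}{l_{N}!}$, and reindexing the exponents of the $\ell_{-i}(x)$ against the $l_{j}$ yields precisely the double-product form displayed in the theorem (with $r = n$). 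This bookkeeping is routine but must be done carefully to confirm that the exponent on $\ell_{-i}(x)$ is $l_{N-i}$ and that the constraint $0 \leq l_{1} \leq l_{2} \leq \cdots \leq l_{N}$ emerges from the lower limits $l_{i} \leq l_{i+1}$ on the nested sums.

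Finally, I would invoke the polynomiality argument verbatim from Proposition~\ref{prop:method2case1}: the expansion $e^{y\frac{d}{dx}}\ell_{-N}(x)^{r}$ lies in a space whose coefficients (the coefficients of each monomial $\prod_{i}\ell_{-i}(x)^{l_{N-i}}\,y^{l_{N}}$) are polynomials in $r$, because $\frac{d}{dx}$ acts on $\ell_{-N}(x)^{r}$ with polynomial-in-$r$ coefficients at every order. Since the integer case determines each such coefficient polynomial on all of $\mathbb{N}$, and a polynomial is determined by its values at infinitely many points, the formula extends to all $r \in \mathbb{C}$.

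The main obstacle is the combinatorial bookkeeping in the second step: one must verify that the telescoping factorials collapse cleanly and that the ascending-chain condition $0 \leq l_{1} \leq \cdots \leq l_{N}$ is exactly what the nested lower summation limits produce, with no off-by-one error in how the index $i$ labels the variables $\ell_{-i}(x)$. The $n^{l_{1}}$ versus $\st{l_{1}}{\cdot}$ asymmetry at the outermost layer is the one genuinely non-mechanical point, and it traces back to the fact that the factor of $n$ in the exponent does not get reabsorbed after the first recursion step; I would flag this explicitly to reassure the reader that the product $\prod_{i=1}^{N-1}\st{l_{i+1}}{l_{i}}$ correctly omits a Stirling factor at the top.
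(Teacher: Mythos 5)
Your proposal is correct and follows essentially the same route as the paper: the paper likewise iterates the second form of Theorem~\ref{th:itlogrec}, expands the resulting nested exponentials via (\ref{stirl2gen}) so that each layer contributes a Stirling number of the second kind while the outermost layer contributes $n^{l_{1}}$, telescopes the factorials to $\frac{1}{l_{N}!}$, and then passes from $n \in \mathbb{N}$ to $r \in \mathbb{C}$ exactly by the polynomiality argument of Proposition~\ref{prop:method2case1}. One minor slip to fix: the outermost factor is $\frac{1}{l_{1}!}$, coming from the plain exponential series rather than from (\ref{stirl2gen}), not $\frac{n!}{l_{1}!}$ as you wrote --- with $\frac{n!}{l_{1}!}$ the telescope would leave $\frac{n!}{l_{N}!}$ instead of the correct $\frac{1}{l_{N}!}$.
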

\begin{flushright} $\square$ \end{flushright}

\section{Recurrences}
The coefficients of our various expansions satisfy linear recurrence
relations.  We shall indicate these recurrences for the low $N$ cases
giving the Stirling numbers of the first and second kinds and leave to
the interested reader any routine generalization.

\subsection{Logarithmic case}
Let $M(m,j)$ satisfy
\begin{align*}
\left(\frac{d}{dx}\right)^{m}\ell_{0}(x)^{s}\ell_{1}(x)^{r}
=
\sum_{j=0}^{m}
r(r-1) \cdots(r-j+1)
(-1)^{m-j}M(m,j)
\ell_{0}(x)^{s-m}\ell_{1}(x)^{r-j}.
\end{align*}
Two terms from
$\left(\frac{d}{dx}\right)^{m-1}\ell_{0}(x)^{s}\ell_{1}(x)^{r}$
contribute to each term of the next derivative.  Namely
\begin{align*}
r(r-1) \cdots (r-j+1)
(-1)^{m-1-j}M(m-1,j)
\ell_{0}(x)^{s-m+1}\ell_{1}(x)^{r-j}
\end{align*}
and
\begin{align*}
r(r-1) \cdots(r-j+2)
(-1)^{m-j}M(m-1,j-1)
\ell_{0}(x)^{s-m+1}\ell_{1}(x)^{r-j+1}
\end{align*}
contribute to 
\begin{align*}
r(r-1) \cdots(r-j+1)
(-1)^{m-j}M(m,j)
\ell_{0}(x)^{s-m}\ell_{1}(x)^{r-j},
\end{align*}
which yields 
\begin{align*}
r(r-1) \cdots(r-j+1)
&(-1)^{m-j}M(m,j)\\
=&
r(r-1) \cdots(r-j+1)
(-1)^{m-1-j}(s-m+1)M(m-1,j)\\
&+
r(r-1) \cdots(r-j+2)
(-1)^{m-j}(r-j+1)M(m-1,j-1),
\end{align*}
giving
\begin{align*}
M(m,j)
=
(m-1-s)M(m-1,j)+M(m-1,j-1),
\end{align*}
for $1 \leq j < m$
with boundary conditions easily seen to be given by
\begin{align*}
%M(0,j)=0 \qquad & j >0\\
M(m,0)=&(-s)(1-s) \cdots (m-1-s) \qquad  m > 0, \\
 \text{and} \quad
M(m,m)=&1\qquad  m \geq 0.
\end{align*} 
This immediately gives the following.
\begin{prop}
For all $r \in \mathbb{C}$
\begin{align*}
e^{y\frac{d}{dx}}\ell_{1}(x)^{r}
=\sum_{m \geq 0}
\sum_{n=0}^{m}
\binom{r}{n}
\frac{n!}{m!}
(-1)^{m-n}
\St{m}{n}
%{ m \atopwithdelims[] n }
\left(\frac{y}{\ell_{0}(x)}\right)^{m}\ell_{1}(x)^{r-n},
\end{align*}
where $\St{m}{n}$ 
%${ m \atopwithdelims[] n }$ 
is given by (\ref{rec1}) and (\ref{rec1b}).
%\begin{align*}
%{ m \atopwithdelims[] n }=(m-1){ m-1  \atopwithdelims[] n }+{ m-1 \atopwithdelims[] n-1 },
%\end{align*}
%for $1 \leq n < m$ with
%\begin{align*}
%{0 \atopwithdelims[] n}=0 \quad \text{for} \quad  n >0,
%\qquad 
%{m \atopwithdelims[] 0}=0 \quad \text{for}\quad m > 0,
%\qquad \text{and} \qquad
%{m \atopwithdelims[] m}=1  \quad \text{for}\quad m \geq 0.
%\end{align*}
\end{prop}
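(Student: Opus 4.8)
The plan is to read off the result by specializing the auxiliary coefficient $M(m,j)$ to the exponent $s=0$, since $\ell_1(x)^r=\ell_0(x)^0\ell_1(x)^r$. First I would set $s=0$ in the recurrence $M(m,j)=(m-1-s)M(m-1,j)+M(m-1,j-1)$ established just above, which collapses to $M(m,j)=(m-1)M(m-1,j)+M(m-1,j-1)$ for $1\le j<m$. I would then specialize the boundary data in the same way: the leading factor $-s$ of the product $M(m,0)=(-s)(1-s)\cdots(m-1-s)$ vanishes at $s=0$, so $M(m,0)=0$ for $m>0$, while $M(m,m)=1$ is untouched.

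Next I would observe that these specialized relations are exactly the defining recurrence (\ref{rec1}) and boundary conditions (\ref{rec1b}) for the (signless) Stirling numbers of the first kind $\St{m}{n}$. Because the discrete boundary value problem stated there determines its solution uniquely, this forces $M(m,j)=\St{m}{j}$ once $s=0$.

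It then remains to substitute. Putting $s=0$ and $M(m,j)=\St{m}{j}$ into the defining display for $M$ gives $\left(\frac{d}{dx}\right)^m\ell_1(x)^r=\sum_{j=0}^m r(r-1)\cdots(r-j+1)(-1)^{m-j}\St{m}{j}\ell_0(x)^{-m}\ell_1(x)^{r-j}$. Rewriting the falling factorial as $j!\binom{r}{j}$, applying the definition $e^{y\frac{d}{dx}}=\sum_{m\ge0}\frac{y^m}{m!}(\frac{d}{dx})^m$, grouping $\ell_0(x)^{-m}y^m$ into $(y/\ell_0(x))^m$, and renaming $j$ to $n$ produces the stated formula.

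There is no real obstacle here—the step deserving care is the appeal to uniqueness of the discrete boundary value problem, for which one simply checks that $M$ at $s=0$ obeys the same recurrence on $1\le n<m$ together with the same $n=0$ column and $n=m$ diagonal as $\St{m}{n}$. I note in particular that no separate polynomiality-in-$r$ argument (of the kind used in Proposition~\ref{prop:method2case1}) is needed: the $r$-dependence was already isolated in the explicit factor $r(r-1)\cdots(r-j+1)$, which is valid for every complex $r$, so the identity holds for all $r\in\mathbb{C}$ at once.
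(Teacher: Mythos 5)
Your proposal is correct and is precisely the argument the paper intends when it says the recurrence and boundary data for $M(m,j)$ ``immediately'' give the proposition: specialize $s=0$, note that $M(m,j)\big|_{s=0}$ then satisfies exactly (\ref{rec1}) and (\ref{rec1b}) (the factor $-s$ killing $M(m,0)$ for $m>0$), invoke uniqueness of that discrete boundary value problem to conclude $M(m,j)\big|_{s=0}=\St{m}{j}$, and substitute back with $r(r-1)\cdots(r-j+1)=j!\binom{r}{j}$. Your closing observation is also accurate: since the $r$-dependence sits entirely in the explicit falling-factorial factor, valid for all $r\in\mathbb{C}$, no separate polynomiality-in-$r$ argument of the Proposition~\ref{prop:method2case1} type is required here, which matches the paper's setup.
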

\subsection{Exponential case}
Let $N(m,j)$ satisfy
\begin{align*}
\left( \frac{d}{dx} \right) ^{m}
\ell_{-1}(x)^{s}\ell_{-2}(x)^{r}=
\sum_{j \geq 0}
r^{j}N(m,j)
\ell_{-1}(x)^{s+j}\ell_{-2}(x)^{r}.
\end{align*}
Two terms from $\left(\frac{d}{dx}\right)^{m-1}\ell_{-1}(x)^{s}\ell_{-2}(x)^{r}$
contribute to each term of the next derivative.  Namely
\begin{align*}
r^{j}N(m-1,j)
\ell_{-1}(x)^{s+j}\ell_{-2}(x)^{r}
\end{align*}
and
\begin{align*}
r^{j-1}N(m-1,j-1)
\ell_{-1}(x)^{s+j-1}\ell_{-2}(x)^{r}
\end{align*}
each contribute to
\begin{align*}
r^{j}N(m,j)
\ell_{-1}(x)^{s+j}\ell_{-2}(x)^{r},
\end{align*}
which yields
\begin{align*}
r^{j}N(m,j)=r^{j}(s+j)N(m-1,j)+r^{j-1}rN(m-1,j-1),
\end{align*}
giving
\begin{align*}
N(m,j)=(s+j)N(m-1,j)+N(m-1,j-1),
\end{align*}
for $1 \leq j < m$ with boundary conditions obviously given by
\begin{align*}
N(m,m)=1 \qquad &m \geq 0\\
 \text{and} \quad
N(m,0)=s^{m} \qquad &m > 0. 
\end{align*}
This immediately gives the following.
\begin{prop}
For all $r \in \mathbb{C}$
\begin{align*}
e^{y\frac{d}{dx}}\ell_{-2}(x)^{r}
=\sum_{m \geq 0}
\sum_{n=0}^{m}
\frac{r^{n}}{m!}
\st{m}{n}
\ell_{-1}(x)^{n}\ell_{-2}(x)^{r}y^{m},
\end{align*}
where $\st{m}{n}$ is given by (\ref{rec2}) and (\ref{rec2b}).
%where
%\begin{align*}
%{ m \st{}{} n}=n{ m-1 \st{}{} n} +{m-1 \st{}{} n-1}
%\end{align*}
%for $1 \leq n < m$ with boundary conditions given by
%\begin{align*}
%{0 \st{}{} n }=0 \quad \text{for} \quad n >0,
%\qquad
%{m \st{}{} 0}=0 \quad \text{for} \quad m > 0,
%\qquad \text{and} \quad
%{m \st{}{} m}=1 \quad \text{for} \quad m \geq 0.  
%\end{align*}
\end{prop}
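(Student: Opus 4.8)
The plan is to read the proposition straight off the recurrence and boundary conditions for $N(m,j)$ just established, specialized to the case in which the $\ell_{-1}(x)$-exponent vanishes. Since the proposition concerns $e^{y\frac{d}{dx}}\ell_{-2}(x)^{r}$ rather than $e^{y\frac{d}{dx}}\ell_{-1}(x)^{s}\ell_{-2}(x)^{r}$, I would set $s=0$ throughout. With $s=0$ the recurrence $N(m,j)=(s+j)N(m-1,j)+N(m-1,j-1)$ collapses to $N(m,j)=jN(m-1,j)+N(m-1,j-1)$ for $1\leq j<m$, while the boundary conditions $N(m,m)=1$ for $m\geq 0$ and $N(m,0)=s^{m}$ for $m>0$ become $N(m,m)=1$ and $N(m,0)=0$ for $m>0$ (with $N(0,0)=1$).

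Next I would observe that these are precisely the recurrence (\ref{rec2}) and the boundary conditions (\ref{rec2b}) defining the Stirling numbers of the second kind. Since those equations were noted to determine the Stirling numbers uniquely as the solution of that discrete boundary value problem, uniqueness forces $N(m,j)\big|_{s=0}=\st{m}{j}$ for all $0\leq j\leq m$, both sides vanishing for $j>m$ (on the left because each factor $S_{1}$ raises the $\ell_{-1}(x)$-exponent by one and only $m$ factors are available). Substituting this into the expansion defining $N(m,j)$, taken at $s=0$, gives
\begin{align*}
\left(\frac{d}{dx}\right)^{m}\ell_{-2}(x)^{r}
=\sum_{j=0}^{m} r^{j}\,\st{m}{j}\,\ell_{-1}(x)^{j}\ell_{-2}(x)^{r}.
\end{align*}

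Finally I would apply $e^{y\frac{d}{dx}}=\sum_{m\geq 0}\frac{y^{m}}{m!}\left(\frac{d}{dx}\right)^{m}$ term by term and relabel the inner index $j$ as $n$, which produces exactly the asserted formula. Unlike the theorems of the preceding section, no passage from integer to complex exponents is required, because the operators $S_{0},S_{1}$, and hence the entire expansion defining $N(m,j)$, were already set up for arbitrary $r,s\in\mathbb{C}$. The only point demanding care, and the closest thing to an obstacle here, is the bookkeeping at the boundary: one must check that the $s=0$ specialization sends $N(m,0)=s^{m}$ to $0$ for $m>0$ (matching $\st{m}{0}=0$) while retaining $N(0,0)=1$, so that the uniqueness theorem applies cleanly and the summation range $0\leq n\leq m$ emerges correctly.
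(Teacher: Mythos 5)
Your proposal is correct and is essentially the paper's own argument: the paper derives the recurrence $N(m,j)=(s+j)N(m-1,j)+N(m-1,j-1)$ with the stated boundary conditions and then asserts the proposition ``immediately,'' which is exactly your specialization $s=0$ matching (\ref{rec2}) and (\ref{rec2b}), followed by uniqueness and termwise exponentiation. Your added care about the boundary bookkeeping at $s=0$ and the observation that no integer-to-complex passage is needed are both accurate fillings-in of the details the paper leaves implicit.
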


\section{Identities}
Because we used two different methods to calculate the formal analytic
expansions we may equate the results to get combinatorial identities.
We have been anticipating some of these results already and so have
already used the same notation for two different expressions for the
Stirling numbers.  Therefore, temporarily, in this section, Stirling
numbers of the first kind will be denoted by $\St{m}{n}_{1}$ 
%${m \atopwithdelims[] n}_{1}$ 
when given by (\ref{stirl1}) and by $\St{m}{n}_{2}$
%${m \atopwithdelims[] n}_{2}$
when given by (\ref{eq:stirling1}).  Similarly, Stirling numbers of
the second kind will be denoted by ${\st{m}{n}}_{1}$ when given by
(\ref{stirl2}) and by ${\st{m}{n}}_{2}$ when given by
(\ref{eq:stirling2}).

\subsection{Logarithmic case}
We wish to equate the expansions of Theorems \ref{itlogexpfrst} and
\ref{itlogexpsecond}.  We have for $r \in \mathbb{C}$ and $N \geq 0$,
\begin{align*}
\ell_{N}(x+y)^{r}&=
\sum_{m \geq 0}\frac{y^{m}}{m!}
\underset{0 \leq j_{0},j_{1},\cdots, j_{N}}
{\sum_{j_{0}+j_{1}+\cdots+j_{N}=m}}
j_{N}!\binom{r}{j_{N}}
(-1)^{\alpha_{0}-\alpha_{N}}
\left(\prod_{i=0}^{N-1}
\St{\alpha_{i}}{\alpha_{i+1}}_{1}
%{ \alpha_{i} \atopwithdelims[] \alpha_{i+1}}_{1}
\right)
\ell_{N}(x)^{r}\prod_{i=0}^{N}\ell_{i}(x)^{-\alpha_{i}}\\
&=
\ell_{N}(x)^{r}
\sum_{0 \leq s_{N} <\cdots < s_{1} < s_{0} }
\quad
\binom{r}{s_{N}}
\frac{s_{N}!}{s_{0}!}
\prod_{i=0}^{N-1}
\St{s_{i}}{s_{i+1}}_{2}
%{s_{i} \atopwithdelims[] s_{i+1}}_{2}
%\genfrac{[}{]}{1.5pt}{}{s_{i}}{s_{i+1}}
(-1)^{s_{0}+s_{N}}
\prod_{i=0}^{N}\ell_{i}(x)^{-s_{i}}
y^{s_{0}}.
\end{align*}
Recalling that 
\begin{align*}
\alpha_{i}=j_{i}+ \cdots+j_{N},
\end{align*}
we get 
\begin{align*}
j_{i}=\alpha_{i}-\alpha_{i+1},
\end{align*}
for $0 \leq i \leq N-1$ and $\alpha_{N}=j_{N}$.  Then we may rewrite
the first expression for $\ell_{N}(x+y)^{r}$ as
\begin{align*}
\ell_{N}(x+y)^{r}&=
\sum_{0 \leq \alpha_{N} \leq \cdots \alpha_{2} \leq \alpha_{1} \leq \alpha_{0}}
\alpha_{N}!
\binom{r}{\alpha_{N}}
(-1)^{\alpha_{0}-\alpha_{N}}
\left(\prod_{i=0}^{N-1}
\St{\alpha_{i}}{\alpha_{i+1}}_{1}
%{ \alpha_{i} \atopwithdelims[] \alpha_{i+1}}_{1}
\right)
\ell_{N}(x)^{r}\prod_{i=0}^{N}\ell_{i}(x)^{-\alpha_{i}}
\frac{y^{\alpha_{0}}}{\alpha_{0}!},
\end{align*}
and now equating coefficients, we get
\begin{align*}
\prod_{i=0}^{N-1}
\St{s_{i}}{s_{i+1}}_{1}
%{ s_{i} \atopwithdelims[] s_{i+1}}_{1}
=
\prod_{i=0}^{N-1}
\St{s_{i}}{s_{i+1}}_{2}
%{s_{i} \atopwithdelims[] s_{i+1}}_{2}
,
\end{align*}
which of course, is only interesting in the $N=1$ case, which gives the
classical identity:
\begin{align*}
\frac{m!}{n!}
\underset{i_{l} \geq 1}{\sum_{i_{1}+\cdots+ i_{s}=m}}
\frac{1}{i_{1}\cdots i_{n}}
=\sum_{0 \leq t_{1} < t_{2} < \cdots < t_{m-n}< m}t_{1} \cdots t_{m-n},
\end{align*}
for $1 \leq n \leq m$.  Well actually, it gives more than this, because it
gives an automatic proof of Proposition~\ref{prop:inditstirl1}.  That
is, equating coefficients also gives
\begin{align*}
(\alpha_{0}-\alpha_{1}, \dots , \alpha_{N-1}-\alpha_{N};\alpha_{N})_{1}
=
\prod_{i=0}^{N-1}
\St{\alpha_{i}}{\alpha_{i+1}}_{2}
%{\alpha_{i} \atopwithdelims[] \alpha_{i+1}}_{2}
.
\end{align*}
Then the $N=1$ case gives
\begin{align*}
(\alpha_{0}-\alpha_{1};\alpha_{1})_{1}
=
\St{\alpha_{0}}{\alpha_{1}}_{2}
%{\alpha_{0} \atopwithdelims[] \alpha_{1}}_{2}
,
\end{align*}
which yields
\begin{align*}
(\alpha_{0}-\alpha_{1}, \dots , \alpha_{N-1}-\alpha_{N};\alpha_{N})_{1}
=
\prod_{i=0}^{N-1}
(\alpha_{i}-\alpha_{i+1};\alpha_{i+1})_{1},
\end{align*}
or 
\begin{align*}
(j_{0}, \dots , j_{N-1};j_{N})_{1}
=
\prod_{i=0}^{N-1}
(j_{i};j_{i+1}+ \cdots + j_{N})_{1},
\end{align*}
which is Proposition~\ref{prop:inditstirl1}.
\subsection{Exponential case}
We wish to equate the expansions of Theorems \ref{itexpfirstmeth} and
\ref{itexpsecondmeth}.  We have for $r \in \mathbb{C}$ and $N \geq 0$,
\begin{align*}
\ell_{-N}(x+y)^{r}=&\sum_{m \geq 0}\frac{y^{m}}{m!}
\underset{0 \leq j_{1},j_{2},\cdots, j_{N}}
{\sum_{j_{1}+\cdots+j_{N}=m}}
r^{j_{N}}
\left(\prod_{i=1}^{N-1}
{\st{\alpha_{i}}{\alpha_{i+1}}}_{1}
%(j_{i};\alpha_{i+1})_{2}
\right)
\ell_{-N}(x)^{r}
\prod_{i=1}^{N-1}\ell_{-i}(x)^{\alpha_{i+1}}\\
=&
\ell_{-N}(x)^{r}
\sum_{0 \leq l_{1} \leq l_{2} \leq \cdots \leq l_{N}}
r^{l_{1}}
\prod_{i=1}^{N-1} 
{\st{l_{i+1}}{l_{i}}}_{2}
\prod_{i=1}^{N-1}\ell_{-i}(x)^{l_{N-i}}
\frac{y^{l_{N}}}{l_{N}!}.
\end{align*}
As in the last section, we may substitute $\alpha_{i}$'s for the
$j_{i}$'s to get
\begin{align*}
&\sum_{0 \leq \alpha_{N} \leq \cdots \alpha_{2} \leq \alpha_{1}}
\frac{y^{\alpha_{1}}}{\alpha_{1}!}
r^{\alpha_{N}}
\left(\prod_{i=1}^{N-1}
{\st{\alpha_{i}}{\alpha_{i+1}}}_{1}
%(\alpha_{i}-\alpha_{i+1};\alpha_{i+1})_{2}
\right)
\ell_{-N}(x)^{r}
\prod_{i=1}^{N-1}\ell_{-i}(x)^{\alpha_{i+1}}\\
=&
\ell_{-N}(x)^{r}
\sum_{0 \leq l_{1} \leq l_{2} \leq \cdots \leq l_{N}}
r^{l_{1}}
\prod_{i=1}^{N-1} 
{\st{l_{i+1}}{l_{i}}}_{2}
\prod_{i=1}^{N-1}\ell_{-i}(x)^{l_{N-i}}
\frac{y^{l_{N}}}{l_{N}!},
\end{align*}
which gives
\begin{align*}
&\sum_{0 \leq l_{1} \leq \cdots l_{N-1} \leq l_{N}}
\frac{y^{l_{N}}}{l_{N}!}
r^{l_{1}}
\left(\prod_{i=1}^{N-1}
{\st{l_{i+1}}{l_{i}}}_{1}
%(l_{i+1}-l_{i};l_{i})_{2}
\right)
\ell_{-N}(x)^{r}
\prod_{i=1}^{N-1}\ell_{-i}(x)^{l_{N-i}}\\
=&
\ell_{-N}(x)^{r}
\sum_{0 \leq l_{1} \leq l_{2} \leq \cdots \leq l_{N}}
r^{l_{1}}
\prod_{i=1}^{N-1} 
{\st{l_{i+1}}{l_{i}}}_{2}
\prod_{i=1}^{N-1}\ell_{-i}(x)^{l_{N-i}}
\frac{y^{l_{N}}}{l_{N}!},
\end{align*}
yielding
\begin{align*}
\prod_{i=1}^{N-1}
\st{l_{i+1}}{l_{i}}_{1}
%(l_{i+1}-l_{i};l_{i})_{2}
=
\prod_{i=1}^{N-1} \st{l_{i+1}}{l_{i}}_{2},
\end{align*}
which, of course, is only interesting in the $N=2$ case, which gives
the classical identity:
\begin{align*}
\frac{m!}{n!}\sum_{i_{1}+i_{2}+\cdots i_{n}=m}\frac{1}{i_{1}!i_{2}!\cdots i_{n}!}
=
\sum_{0 \leq i_{1} \leq i_{2} \leq \cdots  \leq i_{m} 
\leq n}i_{1}i_{2}\cdots i_{m},
\end{align*}
for $1 \leq n \leq m$.  Well, actually, as in the logarithm case, it gives
more, because it gives an automatic proof of
Proposition~\ref{prop:stirl2ind}.  That is, equating coefficients also
gives
\begin{align*}
(\alpha_{1}-\alpha_{2},\dots, \alpha_{N-1}-\alpha_{N};\alpha_{N})_{2}
=
\prod_{i=1}^{N-1} \st{\alpha_{i}}{\alpha_{i+1}}_{2}.
\end{align*}
Then the $N=2$ case gives 
\begin{align*}
(\alpha_{1}-\alpha_{2};\alpha_{2})_{2}
=
\st{\alpha_{1}}{\alpha_{2}}_{2},
\end{align*} 
which yields 
\begin{align*}
(\alpha_{1}-\alpha_{2},\dots, \alpha_{N-1}-\alpha_{N};\alpha_{N})_{2}
=
\prod_{i=1}^{N-1}
(\alpha_{i}-\alpha_{i+1};\alpha_{i+1})_{2},
\end{align*}
or
\begin{align*}
(j_{1},\dots, j_{N-1};j_{N})_{2}
=
\prod_{i=1}^{N-1}
(j_{i};j_{i+1}+\cdots +j_{N})_{2},
\end{align*}
which is Proposition~\ref{prop:stirl2ind}.

\noindent {\small \sc Department of Mathematics, Rutgers University,
Piscataway, NJ 08854} 
\\ {\em E--mail
address}: thomasro@math.rutgers.edu
\end{document}